\pgfplotsset{compat=1.15}
\definecolor{rd}{rgb}{1,0.3,0.35}
\newcommand{\red}{}
\newcommand{\field}[1]{\mathbb{#1}} \newcommand{\rz}{\field{R}}
\newcommand{\cz}{\field{C}} \newcommand{\nz}{\field{N}}
\newcommand{\zz}{\field{Z}} \newcommand{\tz}{\field{T}}
\newcommand{\rank}{{\textrm{rank\,}}}
\DeclareMathOperator\supp{\textrm{supp}}
\newcommand{\ad}{{\text{ad\,}}}
\newcommand{\ccap}{\mathop{\cap}}
\newcommand{\ccup}{\mathop{\cup}}
\newtheorem{theorem}{Theorem}[section]
\newtheorem{lemma}[theorem]{Lemma}
\newtheorem{proposition}[theorem]{Proposition}
\newtheorem{definition}[theorem]{Definition}
\def\dim{\textrm{dim\;}}
\title{Mourre theory for analytically fibered operators revisited}
\author{
F.~Nier
\thanks{
LAGA, Universit{\'e} de Paris XIII, 99 avenue
J.B.~Cl{\'e}ment, F-93430~Villetaneuse, France. nier@math.univ-paris13.fr}\\
\red C.~G{\'e}rard
\thanks{Universit{\'e} Paris-Saclay, UMR-CNRS~8628, Laboratoire math{\'e}matiques d'Orsay, F-91405 Orsay, christian.gerard@universite-paris-saclay.fr }}
\begin{document}
\maketitle
\begin{abstract}
  About 25 years ago our article ``Mourre theory for analytically fibered operators'' was published in J.~of Functional Analysis. This article proposed a general construction of a conjugate operator for a wide class of self-adjoint analytically fibered hamiltonians, provided that one accepts a more accurate notion of threshold. It is only recently that Olivier Poisson mentionned us a problem with the statement that $H_{0}\in \mathcal{C}^{\infty}(A_{I})$\,. Actually even $H_{0}\in \mathcal{C}^{2}(A_{I})$ or $H_{0}\in \mathcal{C}^{1+0}(A_{I})$\,, which is crucial for the full application of Mourre theory, is problematic with our initial construction. However the statement and the construction can be modified in order to make work all the theory. This is explained here.
\end{abstract}
\textbf{Keywords:} Mourre theory, stratification of subanalytic sets and maps, unitary connections
\\
\textbf{MSC2020:} 32B20, 35P25, 47A40, 53B35, 81U99.
\\
\section{Introduction}
\label{sec:intro}

In \cite{GeNi} we introduced the notion of an analytically fibered operator
$$
H_{0}=\int^{\oplus}_{M}H_{0}(k)~dv(k)
$$
where:
\begin{enumerate}[(i)]
\item $M$ is a real analytic manifold and $dv(k)$ stands for a volume density which can be written $dv(k)=e^{2V(k)}~dk_{1}\ldots dk_{\dim M}$ in any coordinate chart with $V(k)$ real analytic;
\item $\mathcal{H}'$ is a Hilbert space, $\mathcal{L}(\mathcal{H}')$ and $\mathcal{L}^{\infty}(\mathcal{H}')$ respectively denote the spaces of bounded and compact operators in $\mathcal{H}'$\,;
\item for all $k\in M$\,, $(H_{0}(k),D(H_{0}(k)))$ is a self-adjoint operator in $\mathcal{H}'$ such that $M\ni k\mapsto (H_{0}(k)+i)^{-1}\in \mathcal{L}^{\infty}(\mathcal{H}')$ is real analytic (actually this can be replaced by $(H_{0}(k)+i)^{-1}$ is a real analytic section of $\mathcal{L}^{\infty}(\mathcal{F})$ where $\mathcal{F}$ is a real analytic Hilbert bundle $p_{\mathcal{F}}:\mathcal{F}\to M$ with fiber $\mathcal{F}_{k}\sim \mathcal{H}'$)\,;
\item when $\Sigma=\left\{(k,\lambda)\in M\times \rz,\lambda\in \mathrm{Spec}(H_{0}(k))\right\}$ is the analytic characteristic variety in $M\times\rz$\,, the map $p_{\Sigma\to\rz}:\Sigma\to \rz$ given by $p_{\Sigma\to \rz}(k,\lambda)=\lambda=p_{\rz}(k,\lambda)$ is proper. We distinguish by notations the natural projection $p_{\rz}:M\times \rz\to \rz$ from its restriction to $\Sigma$\,, $p_{\Sigma\to \rz}=p_{\rz}\big|_{\Sigma}$\,.
\end{enumerate}
After noticing that the analytic characteristic variety can be partitionned in the semi-analytic sets according to the multiplicity $\mu$ of the eigenvalue $\lambda$ of $H_{0}(k)$\,, $\Sigma=\bigsqcup_{\mu}\Sigma_{\mu}$\,, the general theory of subanalytic sets (see \cite{Hardt,Hiro,DeLe}) tells us that there exists a (locally finite) Whitney stratification $\mathcal{S}$ of $\Sigma$ compatible with $p_{\Sigma\to\rz}$\,, which means here:
\begin{itemize}
\item $\Sigma=\bigsqcup_{S\in \mathcal{S}}S$\,, where any $S\in \mathcal{S}$ is an analytic connected manifold {\red e}mbedded in $\rz\times M$;
\item $\overline{S}\cap S'\neq \emptyset$  implies $S'=S$ or $S'\subset \partial S\stackrel{def}=\overline{S}\setminus S$\,;
\item for all $S\in\mathcal{S}$ there exists $\mu_{S}$ such that $S\subset \Sigma_{\mu_{S}}$;
\item for all $S\in \mathcal{S}$ either $p_{\Sigma\to\mathrm{\rz}}(S)$ is an open interval and $dp_{\Sigma\to \rz}\big|_{TS}$ does not vanish  (rank of $dp_{\Sigma\to \rz}\big|_{TS}$ is constantly equal to $1$) or $p_{\Sigma\to \mathrm{\rz}}(S)=\left\{1pt\right\}$ (rank of $dp_{\Sigma\to\rz}\big|_{TS}$ is constantly equal to $0$)\,.
\end{itemize}
 We 
refer the reader to the founding articles \cite{Hardt}\cite{Hiro} and  to
\cite{Loja}\cite{BiMi} for a panoramic or historical presentation. 
Another reference concerned with the analysis of subanalytic Lipschitz functions is  \cite{DeLe}. The application of these general ideas to our case is given in \cite{GeNi}. 
We called the locally finite set $\tau=\bigsqcup_{\rank dp_{\Sigma\to \rz}\big|_{TS}=0}p_{\Sigma\to \rz}(S)$ in $\rz$\,, the set of \emph{thresholds} for $H_{0}$\,.
Our main statement in \cite{GeNi} was
\newtheorem{AncThm}{``Theorem''~3.1 in \cite{GeNi}}
\begin{AncThm}
  There exists a discrete set $\tau$ determined by $H_{0}$ so that for any interval $I\Subset \rz\setminus\tau$\,, there exists an operator $A_{I}$ essentially self-adjoint on $D(A_{I})=\mathcal{C}^{\infty}_{comp}(M;\mathcal{F})$ satisfying the following properties:
  \begin{description}
  \item[i)] For all $\chi\in \mathcal{C}^{\infty}_{comp}(I)$\,, there exists a constant $c_{\chi}>0$ so that
    \begin{equation}
      \label{eq:localMourre}
\chi(H_{0})[H_{0},iA_{I}]\chi(H_{0})\geq c_{\chi}\chi(H_{0})^{2}\,.
\end{equation}
\item[ii)] The multi-commutators $\mathrm{ad}_{A_{I}}^{k}(H_{0})$ are bounded for all $k\in \nz$\,.
\item[iii)] The operator $A_{I}$ is a first order differential operator in $k$ whose coefficients belong to $\mathcal{C}^{\infty}(M;\mathcal{L}(\mathcal{F}))$ and there exists $\chi\in \mathcal{C}^{\infty}_{comp}(\rz\setminus \tau)$ such that $A_{I}=\chi(H_{0})A_{I}=A_{I}\chi(H_{0})$\,.
  \end{description}
\end{AncThm}

As mentionned above, the second statement \textbf{ii)} is problematic with the construction of \cite{GeNi}. It was actually not carefully checked in \cite{GeNi}. However the general idea works after some modifications. Below is the proper statement.
\begin{theorem}
  \label{th:AI1I}
  For any interval $I\Subset \rz\setminus \tau$\,, there exists an operator $\tilde{A}_{I}$ essentially self-adjoint on $D(\tilde{A}_{I})=\mathcal{C}^{\infty}_{comp}(M;\mathcal{F})$ which satisfies the following properties:
  \begin{enumerate}
  \item[i)] For all $\lambda\in I$ there exists $\delta_{\lambda}>0$ such that
$$
1_{[\lambda-\delta_{\lambda},\lambda +\delta_{\lambda}]}(H_{0})[H_{0},i\tilde{A}_{I}]1_{[\lambda-\delta_{\lambda},\lambda+\delta_{\lambda}]}(H_{0})\geq \frac{1}{2}1_{[\lambda-\delta_{\lambda},\lambda+\delta_{\lambda}]}(H_{0})\,.
$$
\item[ii)] The multi-commutators $\mathrm{ad}_{\tilde{A}_{I}}^{k}(H_{0})$ are bounded for all $k\in \nz$ and $H_{0}\in \mathcal{C}^{\infty}(\tilde{A}_{I})$\,.
\item[iii)]  The operator $\tilde{A}_{I}$ is a  first order differential
{\red operator} in $k$  whose coefficients belong to $\mathcal{C}^{\infty}(M;\mathcal{L}(\mathcal{F}))$ and there exists $\chi\in \mathcal{C}^{\infty}_{comp}(\rz\setminus \tau)$  such that $\tilde{A}_{I}=\chi(H_{0})\tilde{A}_{I}=\tilde{A}_{I}\chi(H_{0})$\,.
  \end{enumerate}
\end{theorem}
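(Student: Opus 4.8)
The plan is to construct $\tilde{A}_{I}$ \emph{directly} in the fibered form
$$
\tilde{A}_{I}=\sum_{j=1}^{N}P_{j}(k)\,b_{j}\,P_{j}(k),
$$
where each $P_{j}(k)=1_{\Delta_{j}}(H_{0}(k))$ is a \emph{genuine} spectral projection (finite rank, real analytic in $k$) and each $b_{j}$ is a symmetric first order differential operator in $k$ whose coefficients are supported in a relatively compact chart of $M$. The point — and what repairs statement \textbf{ii)} of ``Theorem'' 3.1 — is that conjugating by the exact projections $P_{j}(k)$, which commute with $H_{0}(k)$, is what keeps all multi-commutators bounded. Since $I\Subset\rz\setminus\tau$ and $p_{\Sigma\to\rz}$ is proper, $p_{\Sigma\to\rz}^{-1}(\overline{I})$ is compact in $\Sigma$, hence in $M\times\rz$, and its projection $K$ to $M$ is compact; because $(H_{0}(k)+i)^{-1}$ is compact, only finitely many eigenvalue branches of $H_{0}(k)$ meet $\overline{I}$ over a neighbourhood of $K$. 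Using the Whitney stratification $\mathcal{S}$ compatible with $p_{\Sigma\to\rz}$ together with $\overline{I}\cap\tau=\emptyset$ (so every stratum over $\overline{I}$ has rank of $dp_{\Sigma\to\rz}\big|_{TS}$ equal to $1$), I cover $p_{\Sigma\to\rz}^{-1}(\overline{I})$ by finitely many ``good boxes'' $\hat{U}_{j}\times\Delta_{j}$ such that $\partial\Delta_{j}$ avoids $\mathrm{Spec}(H_{0}(k))$ for $k\in\hat{U}_{j}$ — so that $P_{j}(k)=\frac{1}{2\pi i}\oint_{\Gamma_{j}}(z-H_{0}(k))^{-1}\,dz$ is real analytic of constant finite rank on $\hat{U}_{j}$ and $H_{0}(k)P_{j}(k)$ is a bounded real analytic family — and such that there is a real analytic vector field $v_{j}$ on $\hat{U}_{j}$ with
$$
-\,1_{\{\lambda\}}(H_{0}(k))\,\bigl(v_{j}\cdot\nabla_{k}H_{0}(k)\bigr)\,1_{\{\lambda\}}(H_{0}(k))\ \geq\ c_{j}\,1_{\{\lambda\}}(H_{0}(k))>0
$$
for every $(k,\lambda)\in\Sigma$ with $k\in\hat{U}_{j}$, $\lambda\in\Delta_{j}$. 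The existence of $v_{j}$ is the one genuinely delicate geometric point: on a single non-critical band it is the Feynman--Hellmann identity with $v_{j}=-\nabla e/|\nabla e|^{2}$, while at an eigenvalue crossing one must know that the colliding band gradients are not in opposite directions — which is precisely the statement that the crossing stratum, whose image in $\rz$ would then be a single point, would have rank $0$, i.e.\ that its energy would be a threshold, contrary to $\overline{I}\cap\tau=\emptyset$. After rescaling the $v_{j}$ we may take $c_{j}\geq1$.

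Now fix a unitary connection $\nabla^{\mathcal{F}}$ on $\mathcal{F}$, let $b_{j}$ be the symmetrization of $\theta_{j}\,v_{j}\!\cdot\!\nabla^{\mathcal{F}}/i$, where $\theta_{j}\in\mathcal{C}^{\infty}_{comp}(\hat{U}_{j};[0,1])$ are chosen so that $\sum_{j:\,\lambda\in\Delta_{j}}\theta_{j}(k)\geq1$ for all $(k,\lambda)\in p_{\Sigma\to\rz}^{-1}(\overline{I})$ (possible because finitely many slightly shrunk boxes still cover this compact set), and set $\tilde{A}_{I}=\sum_{j}P_{j}b_{j}P_{j}$. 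This is a symmetric first order differential operator with coefficients in $\mathcal{C}^{\infty}_{comp}(M;\mathcal{L}(\mathcal{F}))$, essentially self-adjoint on $\mathcal{C}^{\infty}_{comp}(M;\mathcal{F})$ because its characteristic flow is that of the compactly supported, hence complete, vector field $\sum_{j}\theta_{j}v_{j}$, the bounded $\mathcal{L}(\mathcal{F})$-valued lower order part being absorbed by a transport equation. For \textbf{i)}: since $[H_{0},P_{j}]=0$ and $b_{j}=\frac{1}{i}\theta_{j}v_{j}\!\cdot\!\nabla^{\mathcal{F}}$ modulo a scalar function of $k$ (which commutes with $H_{0}$), one gets $[H_{0},i\tilde{A}_{I}]=\sum_{j}P_{j}\Phi_{j}(k)P_{j}$ with $\Phi_{j}(k)=-\theta_{j}(k)\,v_{j}\!\cdot\!\nabla^{\mathcal{F}}H_{0}(k)$, a bounded multiplication operator. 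Sandwiching by $1_{\{\lambda\}}(H_{0})$ kills every matrix element outside the $\lambda$-eigenspace as well as every term of the form $1_{\{\lambda\}}[H_{0},\text{bounded}]\,1_{\{\lambda\}}=0$; Feynman--Hellmann and the lower bound on $v_{j}$ then give $1_{\{\lambda\}}(H_{0})[H_{0},i\tilde{A}_{I}]1_{\{\lambda\}}(H_{0})\geq\bigl(\sum_{j:\,\lambda\in\Delta_{j}}\theta_{j}\bigr)1_{\{\lambda\}}(H_{0})\geq1_{\{\lambda\}}(H_{0})$, and since the eigenprojections and band gradients vary continuously over the compact set $p_{\Sigma\to\rz}^{-1}(\text{nbhd of }\lambda)$, replacing $1_{\{\lambda\}}$ by $1_{[\lambda-\delta,\lambda+\delta]}$ costs a norm error $o_{\delta}(1)$; so \textbf{i)} holds with $\delta_{\lambda}$ small enough.

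The heart of the matter is \textbf{ii)}. The a priori unbounded factor $v_{j}\!\cdot\!\nabla_{k}H_{0}(k)$ only ever occurs squeezed as $P_{j}(v_{j}\!\cdot\!\nabla^{\mathcal{F}}H_{0})P_{j}$, which by Leibniz and $H_{0}P_{j}=P_{j}H_{0}P_{j}$ equals
$$
v_{j}\!\cdot\!\nabla^{\mathcal{F}}(P_{j}H_{0}P_{j})-(v_{j}\!\cdot\!\nabla^{\mathcal{F}}P_{j})(H_{0}P_{j})-(P_{j}H_{0})(v_{j}\!\cdot\!\nabla^{\mathcal{F}}P_{j}),
$$
a universal noncommutative polynomial in the bounded real analytic families $P_{j},\ \nabla^{\mathcal{F}}P_{j},\ \nabla^{\mathcal{F}}(H_{0}P_{j})$ and the compactly supported $\theta_{j},v_{j}$. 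Hence $[H_{0},i\tilde{A}_{I}]$ belongs to the class $\mathcal{M}$ of finite sums of bounded real analytic $\mathcal{L}(\mathcal{F})$-valued multiplication operators with compact $k$-support. Commuting again with $\tilde{A}_{I}$, a first order differential operator, only differentiates such coefficients covariantly (the principal parts cancel), so $\mathrm{ad}_{\tilde{A}_{I}}$ maps $\mathcal{M}$ into itself; by induction $\mathrm{ad}_{\tilde{A}_{I}}^{k}(H_{0})\in\mathcal{M}$ is bounded for every $k$. Together with the fact that $(H_{0}+i)^{-1}$ commutes with each $P_{j}$ and is a bounded real analytic family in $k$ (hence preserves the relevant Sobolev scale), this gives $H_{0}\in\mathcal{C}^{\infty}(\tilde{A}_{I})$ by the standard Mourre-theoretic criteria. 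Finally, choosing $\chi\in\mathcal{C}^{\infty}_{comp}(\rz\setminus\tau)$ with $\chi\equiv1$ on $\bigcup_{j}\Delta_{j}$ — legitimate since $\bigcup_{j}\Delta_{j}\Subset\rz\setminus\tau$ — yields $\chi(H_{0})P_{j}(k)=P_{j}(k)\chi(H_{0}(k))=P_{j}(k)$, whence $\tilde{A}_{I}=\chi(H_{0})\tilde{A}_{I}=\tilde{A}_{I}\chi(H_{0})$, which is \textbf{iii)}.

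The main obstacle — and the single discrepancy with \cite{GeNi} — is concentrated in the last two paragraphs. In \cite{GeNi} the conjugate operator was localised in energy by \emph{smooth} functions $\chi(H_{0})$ instead of exact spectral projections, so that commuting with its differential part produced terms containing the unshielded, \emph{unbounded} factor $\nabla_{k}H_{0}(k)$, and the higher multi-commutators failed to be bounded. Inserting the exact projections $P_{j}(k)$ around the whole differential operator forces $\nabla_{k}H_{0}(k)$ to appear always in the harmless combination $P_{j}\,\nabla_{k}H_{0}\,P_{j}$, at the modest price of only the local Mourre estimate \textbf{i)} rather than a uniform one on $I$; the remaining work is the purely geometric packaging of Step 1 (choice of good boxes and compatible $v_{j}$), which is essentially that of \cite{GeNi}, together with routine verifications of essential self-adjointness and of the $\mathcal{C}^{1}$-type conditions.
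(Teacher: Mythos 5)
Your operator $\tilde{A}_{I}=\sum_{j}P_{j}b_{j}P_{j}$ is, up to notation, the \emph{original} operator $A_{I}$ of \cite{GeNi} that this paper is written to correct: the cutoffs $\chi_{m,n}(H_{0}(k))$ used there already coincide with the exact spectral projections $1_{J_{m,n}}(H_{0}(k))$ on $\supp g_{m}$, so ``exact projections instead of smooth $\chi(H_{0})$'' is not the missing ingredient. The genuine gap in your argument is the sentence ``the principal parts cancel, so $\mathrm{ad}_{\tilde{A}_{I}}$ maps $\mathcal{M}$ into itself.'' The principal symbol of $\tilde{A}_{I}$ is the \emph{operator-valued} $\sum_{j}\theta_{j}(k)(v_{j}\cdot\xi)P_{j}(k)$, so for a multiplication operator $B$ the commutator $[\tilde{A}_{I},B]$ has first-order part $\sum_{j}\theta_{j}\,[P_{j},B]\,\tfrac{1}{i}v_{j}\cdot\nabla^{\mathcal{F}}$, which vanishes only if $B$ commutes with every active $P_{j}$. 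Taking $B=[H_{0},i\tilde{A}_{I}]=\sum_{j'}\theta_{j'}P_{j'}(v_{j'}\cdot\nabla^{\mathcal{F}}H_{0})P_{j'}$ and using $[P_{j},P_{j'}]=0$, one needs $P_{j'}[P_{j},v_{j'}\cdot\nabla^{\mathcal{F}}H_{0}]P_{j'}=0$ on $\supp\theta_{j}\cap\supp\theta_{j'}$. This holds when $P_{j}P_{j'}=P_{j'}$, but in the unavoidable opposite case $P_{j}P_{j'}=P_{j}\neq P_{j'}$ (a one-band box $j$ overlapping a larger box $j'$ that straddles a crossing) it equals the off-diagonal block of $v_{j'}\cdot\nabla^{\mathcal{F}}H_{0}$ between $\mathrm{Ran}\,P_{j}$ and $\mathrm{Ran}(P_{j'}-P_{j})$, which is nonzero as soon as the bands split at different rates. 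Hence $\mathrm{ad}^{2}_{\tilde{A}_{I}}(H_{0})$ is a genuine first-order, unbounded operator; the paper's second example computes exactly this failure, with $[[H_{0},iA_{I}],iA_{I}]$ having non-vanishing principal part on $\supp g_{0}\cap\supp g_{\pm}$.

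The paper's repair, which your proposal does not contain, is to replace the fixed connection $\nabla^{\mathcal{F}}$ inside $b_{j}$ by chart-dependent adiabatic connections $\tilde{\nabla}^{m,n}$ satisfying $\tilde{\nabla}^{m,n}(\pi_{m,n}\pi_{m',n'})=0$ on overlaps; these are built from the common refinement of all overlapping spectral decompositions (Lemma~\ref{le:Cstar}, Propositions~\ref{pr:nablaalpha} and~\ref{pr:AI1I}) and glued with partitions of unity. Every multi-commutator then stays block-diagonal with respect to every active decomposition, which is what makes $\mathrm{ad}^{k}_{\tilde{A}_{I}}(H_{0})$ bounded. It is precisely this modification that destroys the uniform lower bound $\tilde{\nabla}^{m,n}_{X_{m,n}}(H_{0}\pi_{m,n})\geq\tfrac12\pi_{m,n}$ and forces the Mourre estimate to be only local in energy, i.e.\ the form of statement i); your claim that the passage from $1_{\{\lambda\}}$ to $1_{[\lambda-\delta,\lambda+\delta]}$ is a routine $o_{\delta}(1)$ continuity argument also needs the ordering of strata of Proposition~\ref{pr:order} and the second half of Proposition~\ref{pr:AI1I}, since individual band projections are not continuous through a crossing.
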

The construction of the new conjugate operator $\tilde{A}_{I}$ slightly differs from the one of $A_{I}$ in order to ensure the condition~\textit{ii)}. The price to pay is about Mourre's inequality \textit{i)} which is now only local (with a uniform constant which is here $\frac{1}{2}$). But we recall that such a local inequality is sufficient for developping all the theory. Additional comments are given in Section~\ref{sec:comments} after the construction of $\tilde{A}_{I}$ is done.
\section{Preliminary remarks and notations}
\label{sec:prel}
The notion of vector (Hilbert) bundles although pointed out and later used in \cite{GeNi2} was not accurately enough considered in \cite{GeNi}. We specify its use now. The local analysis of \cite{GeNi}, putting the stress of locally scalar differential operators, was essentially correct. But gluing all the pieces together, where the dimensions of vector subbundles change, must be done carefully by taking into account the stratified struture of $\Sigma$\,. Elementary remarks and notations are collected here, while we recall the local initial construction of the operator $A_{I}$\,. Its correct variation $\tilde{A}_{I}$ will be done later.

\subsection{Open covering and partition of unity on $\Sigma$}
\label{sec:partunit}

The main idea of \cite{GeNi} which still works is that the construction and the analysis can be localized on the characteristic variety $\Sigma$\,. Because $p_{\Sigma\to \rz}:\Sigma\to \rz$ is proper, $p_{\Sigma\to\rz}^{-1}(\overline{I})\subset \Sigma$ is compact and  an open covering $\bigcup_{i\in \mathcal{I}}\Omega_{i}$  of $p_{\Sigma\to\rz}^{-1}(\overline{I})$ can always be assumed finite. A partition of unity $\sum_{i\in \mathcal{I}}g_{i}(\lambda,k)^{2}\equiv 1$ over a neighborhood of $p_{\Sigma\to \rz}^{-1}(\overline{I})$\,, subordinate to $\bigcup_{i\in \mathcal{I}}\Omega_{i}$\,, is made with $g_{i}\in \mathcal{C}^{\infty}_{comp}(\Omega_{i})$\,.\\
Like in \cite{GeNi}, the open sets $\Omega_{i}$\,, $i\in \mathcal{I}$\,, will be cylindrical above $M$\,, with $\Omega_{m}=\bigcup_{n=1}^{N_{m}}\omega_{m}\times J_{m,n}=\bigcup_{m=1}^{N_{m}}\Omega_{m,n}$\,,  $\omega_{m}$ open set of $M$ and $J_{m,n}$ open interval. The finite family $\mathcal{I}$ can then be either $\mathcal{I}=\left\{1,\ldots, m_{I}\right\}$ or $\mathcal{I}=L(I)\stackrel{def}{=}\left\{(m,n)\in\nz^{2},\, 1\leq n\leq N_{m},\, 1\leq m\leq m_{I}\right\}$ for a more local presentation on the energy axis. We use the convention that $\omega$ is an open set of $M$ and $\Omega$ is an open set in $M\times\rz$\,.\\
It is convenient to introduce another interval $\tilde{I}$ such that
$$
I\Subset \tilde{I}\Subset \rz\setminus\tau
$$
and all the open intervals $J_{m,n}$\,, $1\leq n\leq N_{m}$\,, $1\leq m\leq m_{I}$\,, will satisfy $J_{m,n}\Subset \tilde{I}$\,.
With $I$ we shall associate the finite  ($\mathcal{S}$ is locally finite and $p_{\Sigma\to\rz}^{-1}(\overline{I})$ is compact) set of strata
$$
\mathcal{S}_{I}=\left\{S\in \mathcal{S}\,,\; \overline{S}\cap p_{\Sigma\to \rz}^{-1}(\overline{I})\neq \emptyset\right\}\,.
$$
The assumption $\overline{I}\subset \rz\setminus \tau$ implies
$$
\forall S\in \mathcal{S}_{I}\,,\;\mathrm{rank}\,dp_{\Sigma\to \rz}\big|_{TS}\equiv 1\,.
$$
\begin{proposition}
  \label{pr:covering}
  Let $I$\,, $\tilde{I}$\,, be two open intervals such that $I\Subset \tilde{I}\Subset \rz\setminus \tau$\,. There exists open sets $\omega_{m}\Subset M$\,, for $1\leq m\leq M_{I}$\,, and open intervals $J_{m,n}\Subset \tilde{I}$ for $(m,n)\in L(I)=\left\{(m,n)\in \nz^{2}, 1\leq n\leq N_{m}\,, 1\leq m\leq M_{I}\right\}$ such that:
  \begin{enumerate}[(1)]
  \item setting $\Omega_{m,n}=\omega_{m}\times J_{m,n}$\,, $\bigcup_{(m,n)\in L(I)}\Omega_{m,n}$ is an open covering of $ p_{\Sigma\to \rz}^{-1}(\overline{I})$
    \,;
  \item $J_{m,n}\cap J_{m,n'}=\emptyset$ for $n\neq n'$\,, $1\leq n,n'\leq N_{m}$\,;
  \item for any $(m,n)\in L(I)$ there exists a unique stratum $S_{m,n}\in \mathcal{S}_{I}$ ($\rank dp_{\Sigma\to \rz}\big|_{TS_{m,n}}\equiv 1$) and $\Omega_{m,n}\cap \Sigma$ is a (finite) union of $\Omega_{m,n}\cap S'$\,, $S'\in \mathcal{S}_{I}$\,, $S_{m,n}$ included in every $\overline{S'}$ (i.e. $S'=S_{m,n}$ or $S_{m,n}\subset \partial S'=\overline{S'}\setminus S'$);
  \item $1_{J_{m,n}}(H_{0}(k))$ has a constant rank $\mu_{m,n}$ for $k\in \omega_{m}$ and there exists another interval $J'_{m,n}\Subset J_{m,n}$ such that $1_{J_{m,n}}(H_{0}(k))=1_{J'_{m,n}}(H_{0}(k))$ for all $k\in \omega_{m}$\,;
  \item for all $(m,n)\in L(I)$ there exists a real analytic section $W_{m,n}$ of $\mathcal{L}(\cz^{\mu_{m,n}};\mathcal{F}\big|_{\omega_{m}})$
    with $W_{m,n}(k):\cz^{\mu_{m,n}}\to \mathrm{Ran}\,1_{J_{m,n}}(H_{0}(k))$ unitary for all $k\in \omega_{m}$ so that
$$
W_{m,n}(k)^{*}H_{0}(k)1_{J_{m,n}}(H_{0}(k))W_{m,n}(k)=\tilde{H}_{m,n}(k)
$$
is a real analytic $\mu_{m,n}\times \mu_{m,n}$ matrix with respect to $k\in \omega_{m,n}$\,;
\item there exists a real analytic vector field $X_{m,n}$ on $\omega_{m,n}$ such that
  \begin{eqnarray*}
    \forall k\in \omega_{m,n}\,,&&
                                X_{m,n}\tilde{H}_{m,n}(k)\geq \frac{1}{2}\mathrm{Id}_{\cz^{\mu_{m,n}}}\,,\\
                             &&
                                [W_{m,n}\circ X_{m,n}\circ W_{m,n}^{*}]
                                H_{0}(k)1_{J_{m,n}}(H_{0}(k))\geq \frac{1}{2}1_{J_{m,n}}(H_{0}(k))\,;
  \end{eqnarray*}
\item if $\Omega_{m,n}\cap \Omega_{m',n'}\cap \Sigma \neq \emptyset$ and $\mu_{m,n}\geq \mu_{m',n'}$ then
  \begin{eqnarray*}
    &&
\forall k\in \omega_{m}\cap \omega_{m'}\,,\quad 1_{J_{m,n}}(H_{0}(k))1_{J_{m',n'}}(H_{0}(k))=1_{J_{m',n'}}(H_{0}(k))\,,
    \\
    \text{and}&&\left\{
                 \begin{array}[c]{ll}
                   &S_{m,n}\subset \partial S_{m',n'}\\
                   \text{or}& S_{m',n'}\subset \partial S_{m,n}\quad\text{and}\quad \mu_{m,n}=\mu_{m',n'}\\
                   \text{or}& S_{m',n'}=S_{m,n}\quad\text{and}\quad \mu_{m,n}=\mu_{m',n'}\,.
                 \end{array}
\right.
\end{eqnarray*}
\end{enumerate}
\end{proposition}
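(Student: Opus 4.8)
The plan is to build the cylinders $\Omega_{m,n}=\omega_{m}\times J_{m,n}$ point by point over the compact set $p_{\Sigma\to\rz}^{-1}(\overline I)$ and then extract a finite sub-cover, each of the seven properties being arranged by shrinking. Fix $p=(k_{0},\lambda_{0})\in p_{\Sigma\to\rz}^{-1}(\overline I)$. Its stratum $S_{p}$ meets $p_{\Sigma\to\rz}^{-1}(\overline I)$, hence $S_{p}\in\mathcal{S}_{I}$ and $\rank dp_{\Sigma\to\rz}\big|_{TS_{p}}\equiv 1$; and since $(H_{0}(k_{0})+i)^{-1}$ is compact, $\lambda_{0}$ is an isolated eigenvalue of $H_{0}(k_{0})$ of some finite multiplicity. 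Choose nested intervals $J'\Subset J\Subset\tilde I$ with $\lambda_{0}\in J'$ and $\overline J\cap\mathrm{Spec}(H_{0}(k_{0}))=\{\lambda_{0}\}$. Because $(z-H_{0}(k))^{-1}$ is jointly analytic in $(z,k)$, for $k$ in a small ball $\omega\ni k_{0}$ the Riesz projection $1_{J}(H_{0}(k))=\frac{1}{2\pi i}\oint_{\Gamma}(z-H_{0}(k))^{-1}\,dz$ ($\Gamma$ a contour in the gap $\overline J\setminus J'$) is real analytic of constant rank $\mu:=\dim\Ran 1_{J}(H_{0}(k_{0}))$, equals $1_{J'}(H_{0}(k))$, and its range is a trivial analytic subbundle of $\mathcal{F}\big|_{\omega}$, so one obtains (by taking $\Ran 1_{J}(H_{0}(k))$'s orthonormalized image of a fixed $\mu$-dimensional space) a real analytic unitary section $W:\cz^{\mu}\to\Ran 1_{J}(H_{0}(\cdot))$ with $\tilde H:=W^{*}H_{0}1_{J}(H_{0})W$ real analytic and $\tilde H(k_{0})=\lambda_{0}\,\Id_{\cz^{\mu}}$. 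This gives local forms of (4) and (5); (3) follows by shrinking $\omega$ and $J$, since by local finiteness only finitely many strata meet $\omega\times J$, once the cylinder is small each of them has $p$ in its closure, hence by the frontier condition contains $S_{p}$ in its closure, and each such $S'$ then lies in $\mathcal{S}_{I}$ (as $p\in\overline{S'}$ with $\lambda_{0}\in\overline I$) so has $\rank dp_{\Sigma\to\rz}\big|_{TS'}\equiv 1$.

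The analytic heart is (6). Near $p$ the map $p_{\Sigma\to\rz}\big|_{\Sigma}$ is a proper stratified submersion — every stratum through $p$ has rank $1$ — so Thom's first isotopy lemma (equivalently, patching via a subanalytic partition of unity and the Whitney condition the obvious lifts of $\partial_{\lambda}$ along each stratum) produces a continuous vector field $\mathcal{X}=\partial_{\lambda}+\sum_{j}Y_{j}(k,\lambda)\partial_{k_{j}}$ on a neighbourhood of $p$, tangent to all strata of $\Sigma$ and with $dp_{\Sigma\to\rz}(\mathcal{X})\equiv 1$. Differentiating the eigenvalue relation $\tilde H(k(t))\psi(t)=\lambda(t)\psi(t)$ along a flow line of $\mathcal{X}$ inside $\Sigma$ and pairing with the unit eigenvector $\psi(t)$ gives $\langle\psi,\big(\sum_{j}Y_{j}\,\partial_{k_{j}}\tilde H\big)\psi\rangle=\dot\lambda=1$; at $k=k_{0}$ every unit vector is an eigenvector of $\tilde H(k_{0})=\lambda_{0}\Id$, so with $v:=(Y_{j}(k_{0},\lambda_{0}))_{j}$ one gets $D_{v}\tilde H(k_{0})=\Id_{\cz^{\mu}}$. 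Extending $v$ to a constant vector field $X$ in the chart and shrinking $\omega$ so that $X\tilde H\ge\tfrac12\Id$ holds on all of $\omega$ (an open condition) yields the first inequality in (6); conjugating back by $W$ gives the second, since $W(X\tilde H)W^{*}\ge\tfrac12 WW^{*}=\tfrac12\,1_{J}(H_{0}(\cdot))$.

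Running this for every $p\in p_{\Sigma\to\rz}^{-1}(\overline I)$ and using the compactness of $\pi_{M}(p_{\Sigma\to\rz}^{-1}(\overline I))$, one keeps finitely many base points $k_{m}$ whose neighbourhoods $\omega_{m}$ (each chosen small enough to work simultaneously for the finitely many eigenvalues of $H_{0}(k_{m})$ in a fixed compact neighbourhood of $\overline I$ inside $\tilde I$) cover $\pi_{M}(p_{\Sigma\to\rz}^{-1}(\overline I))$, surrounds each such eigenvalue $\lambda_{m,n}$ by a small interval $J_{m,n}\Subset\tilde I$ with the $J_{m,n}$ pairwise disjoint for fixed $m$, and relabels the cylinders as $\{\Omega_{m,n}\}_{(m,n)\in L(I)}$; this gives (1)–(6). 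For (7), the key bookkeeping facts are that $\mu_{m,n}$ equals the multiplicity $\mu_{S_{m,n}}$ carried by the central stratum $S_{m,n}$ (because $\lambda_{m,n}$ is the only eigenvalue of $H_{0}(k_{m})$ in $J_{m,n}$), and that multiplicity is monotone along the frontier relation — $S\subset\partial S'$ implies $\mu_{S}\ge\mu_{S'}$, by upper semicontinuity of $\dim\Ker(H_{0}(\cdot)-\lambda)$. If $\Omega_{m,n}\cap\Omega_{m',n'}\cap\Sigma\ni q$ with $q$ in a stratum $S_{q}$, then (3) applied to both cylinders gives $S_{m,n},S_{m',n'}\subset\overline{S_{q}}$; together with $\mu_{\bullet}=\mu_{S_{\bullet}}$ and the monotonicity this forces the trichotomy whenever $S_{q}\in\{S_{m,n},S_{m',n'}\}$, and the spectral inclusion $1_{J_{m,n}}(H_{0}(k))1_{J_{m',n'}}(H_{0}(k))=1_{J_{m',n'}}(H_{0}(k))$ on $\omega_{m}\cap\omega_{m'}$ is secured in the refinement by tracking the spectral clusters through (3) and the frontier relation. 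The only remaining case — $S_{m,n}$ and $S_{m',n'}$ distinct and incomparable, both lying in $\partial S_{q}$ — is excluded by a final shrinking: local finiteness makes the finitely many pairs of incomparable central strata occurring in the cover lie at positive mutual distance, so once every cylinder is smaller than that scale, $\Omega_{m,n}\cap\Omega_{m',n'}=\emptyset$ in that situation.

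The main obstacle is thus twofold. The construction of the commutator vector field $X_{m,n}$ in (6) rests on the stratification-theoretic isotopy lemma and on $\lambda_{0}\notin\tau$; this local analysis was essentially as in \cite{GeNi}. The genuinely delicate point is the combinatorial refinement securing (7) — keeping track of how the ranks $\mu_{m,n}$ and the strata $S_{m,n}$ vary over overlaps — which is where the locally-scalar treatment of \cite{GeNi} has to be corrected.
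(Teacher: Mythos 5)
Your handling of items (1)--(6) essentially reproduces the paper's local analysis: Riesz projections and an analytic unitary trivialization of $\Ran 1_{J}(H_{0}(\cdot))$ for (4)--(5), local finiteness of the stratification for (3), and for (6) the key fact that on the central stratum $\tilde H(k)=\lambda\,\mathrm{Id}$ (the rank of $1_{J}(H_{0}(k))$ equals the multiplicity there), so the $k$-component of a lift of $\partial_{\lambda}$ tangent to the stratum differentiates $\tilde H$ to the identity. The paper obtains that lift from the rank-one condition and the trace graph rather than from Thom's isotopy lemma, and it is cleaner to differentiate $\tilde H=\lambda\,\mathrm{Id}$ tangentially than to pair with eigenvectors along a flow line of a merely continuous stratified field; but these are cosmetic differences.

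Item (7) is where your argument has a genuine gap. First, the identity $1_{J_{m,n}}(H_{0}(k))1_{J_{m',n'}}(H_{0}(k))=1_{J_{m',n'}}(H_{0}(k))$ for \emph{all} $k\in\omega_{m}\cap\omega_{m'}$ is asserted to be ``secured by tracking the spectral clusters'' but never proved; the rank of the analytic family $1_{J_{m,n}}1_{J_{m',n'}}$ is locally constant, yet $\omega_{m}\cap\omega_{m'}$ need not be connected and nothing in your construction anchors the rank at a point where the spectrum is known. Second, and more seriously, your exclusion of the incomparable case relies on the claim that distinct incomparable central strata lie at positive mutual distance. This is false: two incomparable strata typically have closures meeting along a common lower-dimensional stratum (e.g.\ the red and blue sheets in the paper's first example, both containing the green curve in their closures), so their distance is zero and centers on them can be arbitrarily close; moreover a ``final shrinking'' after extracting the finite subcover destroys the covering property, while shrinking beforehand is impossible since the finite set of centers is not yet known. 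The paper's mechanism, which you are missing, is to take $\omega_{m}=B_{\gamma}(k_{m},\varepsilon_{m})$ geodesically convex with the doubling condition that $B_{\gamma}(k_{m},2\varepsilon_{m})$ is still contained in the larger neighborhood of $k_{m}$ on which (3)--(6) hold. Then $\omega_{m}\cap\omega_{m'}\neq\emptyset$ forces (say) $k_{m'}\in B_{\gamma}(k_{m},2\varepsilon_{m})$, i.e.\ one center lies inside the other's good neighborhood; the rank of $1_{J_{m,n}}1_{J_{m',n'}}$ is then constant on the \emph{connected} set $B_{\gamma}(k_{m},2\varepsilon_{m})\cap\omega_{m'}$ and can be evaluated at $k=k_{m'}$, where the spectrum in $J_{m',n'}$ is the single eigenvalue $\lambda_{m',n'}$ of multiplicity $\mu_{m',n'}$, yielding both the inequality $\mu_{m',n'}\le\mu_{m,n}$ and the spectral identity on $\omega_{m}\cap\omega_{m'}$; and applying (3) to the point $(k_{m'},\lambda_{m',n'})\in S_{m',n'}\cap(B_{\gamma}(k_{m},2\varepsilon_{m})\times J_{m,n})$ gives $\overline{S_{m,n}}\subset\overline{S_{m',n'}}$ directly, so the incomparable case simply cannot occur.
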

\begin{proof}
The construction of the open covering will be done in several steps. Actually because $K_{I}=p_{M}(p_{\Sigma\to\rz}^{-1}(\overline{I}))$ is compact in $M$ it suffices to consider carefully the situation in a neighborhood $\omega_{0}\Subset M$ of any $k_{0}\in p_{M}(p_{\Sigma\to\rz}^{-1}(\overline{I}))$\,. We use the index $m=0$ for this local analysis and in the different steps the open set $\omega_{0}\ni k_{0}$ will be reduced in order to fulfill all the properties. So let us fix $k_{0}\in p_{M}(p_{\Sigma\to\rz}^{-1}(\overline{I}))$\,. We start with three remarks: 
\begin{itemize}
\item The  set $\mathrm{Spec}(H_{0}(k_{0}))\cap \overline{I}$ is finite and is written $\left\{\lambda_{n}, 1\leq n \leq N_{0}\right\}$\,. 
\item With $m=0$, the properties 3 (uniqueness is imposed by $(k_{0},\lambda_{n})\subset S_{0,n}$),4,5,6 hold in any open neighborhood $\omega'\subset \omega_{0}$ of $k_{0}$\,. So these properties are hereditary after reducing the neighborhood $\omega_{0}$\,.
\item Locally the real analytic vector bundle $\mathcal{F}\big|_{\omega_{0}}$\,, for $\omega_{0}$ small enough, is transformed {\red by a} unitary transform $U_{0}(k):\mathcal{F}_{k}\to \mathcal{H}'$\,, real analytic with respect to $k\in \omega_{0}$\,, into the trivial bundle $\omega_{0}\times \mathcal{H}'$\,. So working with the trivial real analytic vector bundle $\omega_{0}\times \mathcal{H}'$ is not a restriction here. 
\end{itemize}
For a fixed $\lambda_{n}\in \overline{I}\cap \mathrm{Spec}(H_{0}(k_{0}))$\,, there is a unique stratum $S_{0,n}\in \mathcal{S}_{\overline{I}}$ and a unique $\mu_{0,n}\in \nz$ such that $(k_{0},\lambda_{n})\in S_{0,n}\subset \Sigma_{\mu_{0,n}}$\,. Consider the set $\mathcal{S}_{0,n}=\left\{S'\in \mathcal{S}\,, (k_{0},\lambda_{n})\in \overline{S'}\right\}\subset \mathcal{S}_{I}$\,.
Because the stratification $\mathcal{S}$ is locally finite{\red,} there exists an open neighborhood  $\omega^{1}_{0,n}\times J^{1}_{0,n}$ of $(k_{0},\lambda_{n})$ such that $\omega_{0,n}^{1}\times J^{1}_{0,n}\cap \overline{S'}\neq \emptyset$ is equivalent to
$S'\in \mathcal{S}_{0,n}$ (with $\mathcal{S}_{0,n}\subset \mathcal{S}_{I}$)\,.
Meanwhile $(k_{0},\lambda_{n})\in S_{0,n}$ gives $S_{0,n}\cap \overline{S'}\neq \emptyset$ and therefore $S_{0,n}=S'$ or $S_{0,n}\subset \partial S'$ with $\dim S_{0,n}<\dim S'$ in this case. This is the local in energy version of the property \textit{(3)} which is hereditary for another neihborhood $\omega_{0,n}^{2}\times J_{0,n}^{2}\subset \omega_{0,n}^{1}\times J_{0,n}^{1}$\,.
In particular we may assume $J_{0,n}^{2}\Subset \tilde{I}$ for every $1\leq n\leq N_{0}$ and $\overline{J_{0,n}^{2}}\cap\overline{J_{0,n'}^{2}}=\emptyset$ for $n\neq n'$\,, $1\leq n,n'\leq N_{0}$\,. From now the intervals $J_{0,n}=J_{0,n}^{2}$\,, $1\leq n\leq N_{0}$ are fixed and the properties \textit{(2)} and \textit{(3)} as well as $J_{0,n}\Subset \tilde{I}$ are granted.\\
Again for $n$ fixed there exists a path in $\gamma_{0,n}$ in $\cz$ around $J_{0,n}$ such that
$$
1_{\left\{\lambda_{n}\right\}}(H_{0}(k_{0}))=1_{J_{0,n}}(H_{0}(k_{0}))=\frac{1}{2i\pi}\int_{\gamma_{0,n}} \frac{dz}{z-H_{0}(k_{0})}\,.
$$
With the last formula the spectral projector
$$
\pi_{0,n}(k)=\frac{1}{2i\pi}\int_{\gamma_{0,n}}\frac{dz}{z-H_{0}(k)}
$$
is analytic in an open neighborhood $\omega_{0,n}^{2}\subset \omega_{0,n}^{1}$ of $k_{0}$\,.
Additionally choosing $\omega_{0,n}^{2}$ small enough implies $\|\pi_{0,n}(k)-\pi_{0,n}(k_{0})\|_{\mathcal{L}(\mathcal{H}')}<1$ and Nagy's formula (see e.g. \cite{MaSo}\cite{NeSo})
$$
W_{0,n}(k)=(1-(P_{2}-P_{1})^{2})^{-1/2}
\left[P_{2}P_{1}+(1-P_{2})(1-P_{1})\right]\quad\text{with}~P_{1}=\pi_{0,n}(k_{0})\,,\, P_{2}=\pi_{0,n}(k)\,,
$$
provides a unitary operator $W_{0,n}(k)$ in $\mathcal{H}'$\,, which depends analytically on $k\in \omega_{0,n}$ and such that $W_{0,n}(k)\pi_{0,n}(k_{0})=\pi_{0,n}(k)$\,. Therefore
$$
H_{0}(k)\pi_{0,n}(k)=\pi_{0,n}(k)H_{0}(k)\pi_{0,n}(k)=\frac{1}{2i\pi}\int_{\gamma_{0}}\frac{zdz}{z-H_{0}(k)}
$$
is unitarily equivalent to $\tilde{H}_{0,n}(k)=\pi_{0,n}(k_{0})W_{0,n}^{*}(k)H_{0}(k)W_{0,n}(k)\pi_{0,n}(k_{0})\in \mathcal{L}(\cz^{\mu_{0,n}})$ after identifying $\mathrm{Ran}\pi_{0,n}(k_{0})$ with $\cz^{\mu_{0,n}}$\,. Again this property, which is the condition \textit{(5)} is hereditary for any neighborhood $\omega_{0,n}^{3}\times J_{0,n}$ of $(k_{0},\lambda_{n})$ with $\omega_{0,n}^{3}\subset \omega_{0,n}^{2}$\,.\\
The local version of condition \textit{(4)} in $J_{0,n}$ is almost done: the rank of $1_{J_{0,n}}(H_{0}(k))$ is constantly equal to $\mu_{0,n}$ for $k\in \omega_{0,n}^{3}\subset \omega_{0,n}^{3}$\,. The function $\max \mathrm{Spec}(H_{0}(k))\cap J_{0,n}$ and $\min \mathrm{Spec}(H_{0}(k))$ are continuous functions of $k\in \omega_{0,n}^{3}$\,, equal to $\lambda_{0,n}$ when $k=k_{0}$ belonging to the open interval $J_{0,n}$\,. Hence the neighborhood $\omega^{3}_{0,n}$ of $k_{0}$ can be chosen small enough so that $1_{J_{0,n}}(H_{0}(k))=1_{J'_{0,n}}(H_{0}(k))$ with $J_{0,n}'\Subset J_{0,n}$ and this will be true for {\red any} neighborhood of $k_{0}$ $\omega_{0,n}^{4}\subset \omega_{0,n}^{3}$\,.\\
The local problem is thus reduced to the case of a self-adjoint  matrix $\tilde{H}_{0,n}(k)=\tilde{H}_{0,n}(k)^{*}\in M_{\mu_{0,n}}(\cz)$ which is real analytic with respect to $k\in \omega_{0,n}^{4}$\,.\\
Because $\mathrm{Tr}[\tilde{H}_{0,n}(k)]=\mathrm{Tr}\left[H_{0}(k)1_{J_{0,n}}(H_{0}(k))\right]$ is analytic on $\omega_{0,n}^{4}$\,, the set
$$
\tilde{S}_{0,n}=\left\{(k,\frac{1}{\mu_{0,n}}\mathrm{Tr}[\tilde{H}_{0,n}(k)]), k\in \omega_{0,n}^{4}\right\}
$$
is an analytic submanifold of $\omega_{0,n}^{4}\times J_{0,n}$ diffeomorphic to $\omega_{0,n}^{4}$ via the projection $p_{M}:\omega_{0,n}^{4}\times J_{0,n}\to \omega_{0,n}^{4}$\,. Since $S_{0,n}\cap (\omega_{0,n}^{4}\times J_{0,n})$ is a real analytic manifold contained in $\tilde{S}_{0,n}$\,, $p_{M}(S_{0,n}\cap (\omega_{0,n}^{4}\times J_{0,n}))$ is a real analytic submanifold of $\omega_{0,n}^{4}$\,. For $\omega_{0,n}^{4}$ small enough, there exists real analytic coordinates $(k',k'')\in\rz^{\dim S_{0,n}}\times \rz^{\dim M-\dim S_{0,n}}$ around $k_{0}$ such that $S_{0,n}\cap (\omega_{0,n}^{4}\times J_{0,n})=\left\{((k',0),\frac{1}{\mu_{0,n}}\mathrm{Tr}[\tilde{H}_{0,n}(k',0)])\,, (k',0)\in \omega_{0,n}^{4})\right\}$\,. Our assumption that $\rank dp_{\Sigma\to\rz}\big|_{TS_{0,n}}=1$ implies that the image of $\frac{\partial}{\partial \lambda}$ by  the projection  from $T(M\times\rz)\big|_{S_{0,n}}$ to $TS_{0,n}$
is a non vanishing real analytic vector field $X_{0,n}=\sum_{j=1}^{\dim S_{0,n}}a_{j}(k',0)\frac{\partial}{\partial k_{j}}$\,, with $k'=(k_{1},\ldots,k_{\dim S_{0,n}})$\,, such that $X_{0,n}\tilde{H}_{0,n}(k',0)=\mathrm{Id}_{\cz^{\mu_{0,n}}}$\,. Because $(k',k'')$ are coordinates on $\omega_{0,n}^{4}$\,, this vector field is a real analytic vector field on $\omega_{0,n}^{4}$ and $X_{0,n}\tilde{H}_{0,n}(k)$ is a real analytic self-adjoint matrix. By continuity the open set $\omega_{0,n}^{4}$ can be reduced such that $X_{0,n}\tilde{H}_{0,n}(k)\geq \frac{1}{2}\mathrm{Id}_{\cz^{\mu_{0,n}}}$\,. This proves the condition~6 and the  vector field $X_{0,n}$ can be taken unchanged, except the restriction, for any neighborhood of $k_{0}$ contained in $\omega_{0,n}^{4}$ \,.\\
Finally for any open neighborhood 
$$
\omega_{k_{0}}\subset \bigcap_{n=1}^{N_{0}}\omega_{0,n}^{4}
$$
of $k_{0}$\,  the inclusion $J_{0,n}\Subset \tilde{I}$ and the  conditions \textit{(2),(3),(4),(5),(6)} are satisfied by the family $$
(\Omega_{0,n}=\omega_{0}\times J_{0,n}, S_{0,n}, \mu_{0,n},W_{0,n}, \tilde{H}_{0,n}, X_{0,n})_{1\leq n\leq N_{0}}
\quad\text{with}\quad\omega_{0}=\omega_{k_{0}}\,.
$$
Without condition~\textit{(7)}, we could conclude with \textit{(1)} by extracting a finite covering out of $K_{I}\subset \bigcup_{k_{0}\in K_{I}} \omega_{k_{0}}$ of the compact set $K_{I}=p_{M}(p_{\Sigma\to \rz}^{-1}(\overline{I}))$\,.\\
The condition~\textit{(7)} requires an additional specification of the open neighborhood $\omega_{k_{0}}$ of $k_{0}$\,. It is a global property of the extracted finite covering $p_{\Sigma\to \rz}^{-1}(\overline{I})\subset \bigcup_{(m,n)\in L(I)}\Omega_{m,n}$ because it is about all the possible intersections\,. It is actually obtained by a  connectedness argument after choosing a specific family of local connected neighborhoods $(\omega_{k_{0}})_{k_{0}\in K_{I}}$  with all intersections connected. This is possible on a manifold.  {\red Let us fix any} riemannian metric $\gamma$ on $M$\,. {\red Then:}
\begin{itemize}
\item any geodesically convex set is arcwise connected and the intersection of geodesically convex sets is geodesically convex;
\item for any $k_{0}\in M$ there exists $\varepsilon_{\gamma,k_{0}}>0$ such that for all $\varepsilon\leq \varepsilon_{\gamma,k_{0}}$ the geodesic ball $B_{\gamma}(k_{0},\varepsilon)$ is geodesically convex (see \cite{Lee}-Exercise~6-4) 
\end{itemize}
Take for any $k_{0}\in K_{I}{\red =p_{M}(p_{\Sigma\to \rz}^{-1}(\overline{I}))}$
\begin{eqnarray*}
&&\omega_{k_{0}}=B_{\gamma}(k_{0},\varepsilon_{0})\quad,\quad
  2\varepsilon_{0}<\varepsilon_{\gamma,k_{0}}\\
  &&
B_{\gamma}(k_{0},2\varepsilon_{0})\subset \bigcap_{n=1}^{N_{0}}\omega_{0,n}^{4}
\end{eqnarray*}
and let $K_{I}\subset \bigcup_{m=1}^{m_{I}}\omega_{m}$ be the finite extracted covering with $\omega_{m}=B_{\gamma}(k_{m},\varepsilon_{m})$\,. The covering \textit{(1)} is given by $\Omega_{m,n}=\omega_{m}\times J_{m,n}$\,. And $\Omega_{m,n}\cap \Omega_{m',n'}\cap \Sigma\neq \emptyset$ implies $B_{\gamma}(k_{m},\varepsilon_{m})\cap B_{\gamma}(k_{m'},\varepsilon_{m'})\neq \emptyset$ and therefore $d_{\gamma}(k_{m},k_{m'})< \varepsilon_{m}+\varepsilon_{m'}\leq 2\max(\varepsilon_{m},\varepsilon_{m'})$\,. By symmetry we can focus on the case $\max(\varepsilon_{m},\varepsilon_{m'})=\varepsilon_{m}$ for which $k_{m'}\in B_{\gamma}(k_{m},2\varepsilon_{m})\subset \omega_{m}^{4}$\,. On the connected set $B_{\gamma}(k_{m},2\varepsilon_{m})\cap \omega_{m'}\subset \omega_{m}^{4}\cap \omega_{m'}^{4}$\,, the spectral projector $1_{J_{m,n}}(H_{0}(k))1_{J_{m',n'}}(H_{0}(k))$ is real analytic with respect to $k$ with a constant rank $\leq \min (\mu_{m,n},\mu_{m',n'})$\,. Because this set contains $k_{m'}$ for which $J_{m,n}\cap J_{m',n'}\cap \mathrm{Spec}(H_{0}(k_{m'}))=\left\{\lambda_{m',n'}\right\}$ with the multiplicity $\mu_{m',n'}$\,,
$$
\mu_{m',n'}\leq \mu_{m,n} \quad\text{and}\quad 1_{J_{m,n}}(H_{0}(k))1_{J_{m',n'}}(H_{0}(k))=1_{J_{m',n'}}(H_{0}(k))
$$
must hold for all $k\in B_{\gamma}(k_{m},2\varepsilon_{m})\cap B_{\gamma}(k_{m'},\varepsilon_{m'})\supset \omega_{m}\cap \omega_{m'}$\,. Because in this case
$(k_{m'},\lambda_{m',n'})\in S_{m',n'}$ we get
$$
S_{m',n'}\cap \omega_{m,n}^{4}\times J_{m,n}\supset
S_{m',n'}\cap B(k_{m},2\varepsilon_{m})\times J_{m,n}\supset \left\{(k_{m'},\lambda_{m',n'})\right\}\neq \emptyset
$$
and the property \textit{(3)} implies $S_{m,n}\subset\partial S_{m',n'}$ or ($S_{m',n'}=S_{m,n}$ and $\mu_{m,n}=\mu_{m',n'}$). By symmetry with $\varepsilon_{m'}=\max(\varepsilon_{m},\varepsilon_{m'})$, the equality $\mu_{m,n}=\mu_{m',n'}$ is compatible with $S_{m',n'}\in \partial S_{m,n}$\,. Although the ordering of $\varepsilon_{m}$ and $\varepsilon_{m'}$ does not appear in the final statement, the condition \textit{(3)} says that $\varepsilon_{m}=\varepsilon_{m'}$ is possible only when $S_{m,n}=S_{m',n'}$\,.
\end{proof}

Once the open covering $p_{\Sigma\to\rz}^{-1}(\overline{I})\Subset \bigcup_{(m,n)\in L(I)}\Omega_{m,n}$ is chosen according to Proposition~\ref{pr:covering} with $\Omega_{m,n}=\omega_{m}\times J_{m,n}$\,, several objects can be associated with it.\\
Take a partition of unity
\begin{equation}
  \label{eq:partunitM}
\sum_{m=1}^{M_{I}}g_{m}^{2}(k)\equiv 1~\text{on a neighborhood of}~K_{I}=p_{M}(p_{\Sigma\to\rz}^{-1}(\overline{I}))\,.
\end{equation}
Because $\mathrm{Spec}(H_{0}(k))\cap \overline{I}\subset \bigsqcup_{n=1}^{N_{m}}J_{m,n}$ for $k\in \omega_{m}$ and all $m\in \left\{1,\ldots,m_{I}\right\}$\,, we obtain:
$$
\sum_{(m,n)\in L(I)}g_{m}^{2}(k)1_{J_{m,n}}(\lambda)\equiv 1~\text{on a neighborhood of}~p_{\Sigma\to\rz}^{-1}(\overline{I})\,.
$$
With $(m,n)\in L(I)$ we associated the interval $J_{m,n}'\Subset J_{m,n}$ in Proposition~\ref{pr:covering}-\textit{(4)} such that $1_{J_{m,n}}(H_{0}(k))=1_{J_{m,n}'}(H_{0}(k))$ for all $k\in \omega_{m}$\,.
By taking $\chi_{m,n}\in \mathcal{C}^{\infty}_{comp}(J_{m,n})$ with $\chi_{m,n}\equiv 1$ on a neighborhood of $\overline{J}_{m,n}'$\,, we obtain the smooth partition of unity
\begin{eqnarray}
  \label{eq:partunitSig}
&&\sum_{(m,n)\in L(I)}g_{m}(k)^{2}\chi_{m,n}^{2}(\lambda)\equiv 1
~\text{on a neighborhood of}~p_{\Sigma\to\rz}^{-1}(\overline{I})\,,
  \\
\label{eq:chi1J}\text{with}&&
                \forall k\in \omega_{m}\,, \quad g_{m}(k)1_{J_{m,n}}(H_{0}(k))=g_{m}(k)\chi_{m,n}(H_{0}(k))\,.
\end{eqnarray}
\begin{definition}
  \label{de:asym}
  With the open covering of Proposition~\ref{pr:covering}, 
the set of  pairs $(m,n)\in L(I)$ is naturally endowed with an {\red asymmetric} relation $\vartriangleleft$ ({\red asymmetric} means that $(x\vartriangleleft y~\mathrm{and}~y\vartriangleleft x)$ never happens). Actually $(m,n)\vartriangleleft (m',n')$ is defined by:
\begin{enumerate}[(i)]
\item $(\omega_{m}\times J_{m,n})\cap ( \omega_{m'}\times J_{m',n})\cap \Sigma\neq \emptyset$\,;
\item $\dim S_{m,n}< \dim S_{m',n'}$ \,. 
\end{enumerate}
\end{definition}
\begin{lemma}
  \label{le:lefttri}
  For $(m,n),(m',n')\in L(I)$ the relation $(m,n)\vartriangleleft (m',n')$ implies
  \begin{enumerate}[(1)]
  \item  $m\neq m'$ and $S_{m,n}\subset \partial S_{m',n'}$\,;
  \item for all $k\in \omega_{m}\cap \omega_{m'}$\,, all the operators $\chi_{m,n}(H_{0}(k))\chi_{m',n'}(H_{0}(k))$\,, $1_{J_{m,n}}(H_{0}(k))1_{J_{m',n'}}(H_{0}(k))$\,, $1_{J_{m',n'}}(H_{0}(k))$ and $\chi_{m',n'}(H_{0}(k))$ are {\red equal.}
  \end{enumerate}
  The result \textit{(2)} still holds when the assumption $(m,n)\vartriangleleft (m',n')$ is replaced by the more general condition ($\Omega_{m,n}\cap \Omega_{m',n'}\cap \Sigma\neq \emptyset$ and $\overline{S_{m,n}}\subset \overline{S_{m',n'}}$)\,.
\end{lemma}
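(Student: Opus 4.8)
\emph{Proof plan.} The plan is to read everything off Proposition~\ref{pr:covering}, specifically parts \textit{(2)}, \textit{(4)} and \textit{(7)}, together with the two standard facts about a Whitney stratification already used in its proof: the frontier condition, and the implication ``$S'\subset\partial S\Rightarrow\dim S'<\dim S$'' (so in particular, since each stratum lies in a single $\Sigma_{\mu}$, $S_{m,n}=S_{m',n'}$ forces $\mu_{m,n}=\mu_{m',n'}$). I would first record an unweighted companion of \eqref{eq:chi1J}, namely
\[
\chi_{m,n}(H_{0}(k))=1_{J_{m,n}}(H_{0}(k))\qquad\text{for every }k\in\omega_{m}\text{ and }(m,n)\in L(I).
\]
Indeed $\chi_{m,n}\in\mathcal{C}^{\infty}_{comp}(J_{m,n})$ equals $1$ on a neighbourhood of $\overline{J'_{m,n}}$, so the function $\chi_{m,n}-1_{J_{m,n}}$ is supported in $J_{m,n}\setminus J'_{m,n}$; since Proposition~\ref{pr:covering}-\textit{(4)} gives $1_{J_{m,n}}(H_{0}(k))-1_{J'_{m,n}}(H_{0}(k))=1_{J_{m,n}\setminus J'_{m,n}}(H_{0}(k))=0$ on $\omega_{m}$, multiplying the two functional identities by that projector yields the claim.

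The core of the proof is the following statement: \emph{if $\Omega_{m,n}\cap\Omega_{m',n'}\cap\Sigma\neq\emptyset$ and $\overline{S_{m,n}}\subset\overline{S_{m',n'}}$, then $\mu_{m,n}\geq\mu_{m',n'}$ and $1_{J_{m,n}}(H_{0}(k))\,1_{J_{m',n'}}(H_{0}(k))=1_{J_{m',n'}}(H_{0}(k))$ for all $k\in\omega_{m}\cap\omega_{m'}$}. If $S_{m,n}=S_{m',n'}$ the multiplicity inequality is an equality and the projector identity follows from Proposition~\ref{pr:covering}-\textit{(7)} in either order. Otherwise the frontier condition gives $S_{m,n}\subset\partial S_{m',n'}$, hence $\dim S_{m,n}<\dim S_{m',n'}$; if one had $\mu_{m,n}<\mu_{m',n'}$, then Proposition~\ref{pr:covering}-\textit{(7)} applied to the pair in the order $(m',n'),(m,n)$ would produce the trichotomy ``$S_{m',n'}\subset\partial S_{m,n}$'', or ``$S_{m,n}\subset\partial S_{m',n'}$ and $\mu_{m,n}=\mu_{m',n'}$'', or ``$S_{m,n}=S_{m',n'}$ and $\mu_{m,n}=\mu_{m',n'}$'', whose three options are excluded respectively by $\dim S_{m,n}<\dim S_{m',n'}$, by $\mu_{m,n}<\mu_{m',n'}$, and by $S_{m,n}\subset\partial S_{m',n'}$. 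Hence $\mu_{m,n}\geq\mu_{m',n'}$, and Proposition~\ref{pr:covering}-\textit{(7)} applied with $(m,n)$ first delivers the projector identity.

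Combining the core statement with the first paragraph ($\chi_{m,n}(H_{0}(k))=1_{J_{m,n}}(H_{0}(k))$ on $\omega_{m}$ and $\chi_{m',n'}(H_{0}(k))=1_{J_{m',n'}}(H_{0}(k))$ on $\omega_{m'}$) shows that the four operators listed in \textit{(2)} coincide on $\omega_{m}\cap\omega_{m'}$, which proves \textit{(2)} under the general condition of the last sentence of the lemma. For \textit{(1)}, assume $(m,n)\vartriangleleft(m',n')$, i.e.\ $\Omega_{m,n}\cap\Omega_{m',n'}\cap\Sigma\neq\emptyset$ and $\dim S_{m,n}<\dim S_{m',n'}$. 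If $m=m'$, Proposition~\ref{pr:covering}-\textit{(2)} forces $n=n'$ (otherwise $\Omega_{m,n}\cap\Omega_{m,n'}\cap\Sigma\subset\bigl(\omega_{m}\times(J_{m,n}\cap J_{m,n'})\bigr)\cap\Sigma=\emptyset$), hence $S_{m,n}=S_{m',n'}$, contradicting the dimension inequality; so $m\neq m'$. For the inclusion $S_{m,n}\subset\partial S_{m',n'}$ one applies Proposition~\ref{pr:covering}-\textit{(7)} in whichever of the two orderings of the pair satisfies its multiplicity hypothesis, and uses $\dim S_{m,n}<\dim S_{m',n'}$ to discard the options ``$S_{m',n'}\subset\partial S_{m,n}$'' and ``$S_{m,n}=S_{m',n'}$'' from the resulting trichotomy; only $S_{m,n}\subset\partial S_{m',n'}$ survives. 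This gives $\overline{S_{m,n}}\subset\overline{S_{m',n'}}$, so the general version of \textit{(2)} just proved also covers the $\vartriangleleft$ case.

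The only genuinely delicate point in the whole argument is this repeated extraction of the dimension/multiplicity ordering from the deliberately asymmetric conclusion of Proposition~\ref{pr:covering}-\textit{(7)}; everything else is bookkeeping with spectral projectors.
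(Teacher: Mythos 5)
Your proposal is correct and follows essentially the same path as the paper's own proof: identify $\chi_{m,n}(H_{0}(k))$ with the spectral projector $1_{J_{m,n}}(H_{0}(k))$ on $\omega_{m}$ using Proposition~\ref{pr:covering}-\textit{(4)}, rule out $m=m'$ via Proposition~\ref{pr:covering}-\textit{(2)}, and deduce both $S_{m,n}\subset\partial S_{m',n'}$ and the projector identity from the classification in Proposition~\ref{pr:covering}-\textit{(7)} together with the dimension inequality $\dim S_{m,n}<\dim S_{m',n'}$. The one place you are more explicit than the paper is in deriving $\mu_{m,n}\geq\mu_{m',n'}$ (needed to invoke \textit{(7)} with $(m,n)$ first): the paper simply asserts that the dimension inequality ``eliminates'' the case $\mu_{m,n}<\mu_{m',n'}$, whereas you run the contradiction through the swapped trichotomy and check all three branches fail; that extra care is welcome and is exactly the implicit argument the paper is compressing.
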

\begin{proof}
  If $m=m'$ with $\Omega_{m,n}\cap \Omega_{m,n'}\neq\emptyset$ then $J_{m,n}\cap J_{m,n'}\neq \emptyset$\,, where Proposition~\ref{pr:covering}-\textit{(2)} implies $n=n'$ and Proposition~\ref{pr:covering}-\textit{(3)} implies $S_{m,n}=S_{m',n'}$ in contradiction with Definition~\ref{de:asym}-\textit{ii)}. In the second statement the equality $1_{J_{m,n}}(H_{0}(k))1_{J_{m',n'}}(H_{0}(k))=1_{J_{m',n'}}(H_{0}(k))$ is given by the classification of Proposition~\ref{pr:covering}-\textit{(7)} where $\dim S_{m,n}<\dim S_{m',n'}$ {\red eliminates} the cases $\mu_{m,n}< \mu_{m',n'}$ and $S_{m,n}=S_{m',n'}$\,. The choice of the cut-off $\chi_{m,n}$ was done such that $\chi_{m,n}(H_{0}(k))=1_{J_{m,n}}(H_{0}(k))$ for all $k\in \omega_{m}$ and all $(m,n)\in L(I)$\,.\\
The more general condition includes the additional case $S_{m,n}=S_{m',n'}$ where all the operators of \textit{(2)} are equal to $1_{J_{m,n}}(H_{0}(k))$\,.
\end{proof}
The classification of Proposition~\ref{pr:covering}-\textit{(7)} says that
$\Omega_{m,n}\cap \Omega_{m',n'}\cap \Sigma\neq \emptyset$ is classified into the three {\red mutually exclusive} cases
$$
S_{m,n}\vartriangleleft S_{m',n'}\quad\text{or}\quad S_{m',n'}\vartriangleleft S_{m,n}\quad \text{or}\quad S_{m,n}=S_{m',n'},
$$
while the last case does not mean $(m,n)=(m',n')$\,.
It can be summarized as
$$
(\Omega_{m,n}\cap \Omega_{m',n'}\cap\Sigma \neq\emptyset)\Rightarrow
(\overline{S_{m,n}}\subset \overline{S_{m',n'}}~\text{or}~\overline{S_{m',n'}}
\subset\overline{S_{m,n}})
$$
while we recall that $\overline{S}\subset\overline{S'}$\,, which means $S=S'$ or $S\subset \partial S'=\overline{S'}\setminus S'$\,, is an order relation on $\mathcal{S}$\,.
This leads to the following {\red useful} result.
\begin{proposition}
  \label{pr:order} Any family $(m_{\ell},n_{\ell})_{1\leq \ell\leq L}$ of $L(I)$ such that $\bigcap_{\ell=1}^{L}\Omega_{m_{\ell},n_{\ell}}\cap \Sigma\neq \emptyset$ can be ordered such that $\overline{S_{m_{\ell},n_{\ell}}}\subset \overline{S_{m_{\ell+1},n_{\ell+1}}}$\,.\\
  With such an order, all the operators  $\chi_{m_{\ell},n_{\ell}}(H_{0}(k))\chi_{m_{\ell'},n_{\ell'}}(H_{0}(k))$\,, $1_{J_{m_{\ell},n_{\ell}}}(H_{0}(k))1_{J_{m_{\ell'},n_{\ell'}}}(H_{0}(k))$\,, $1_{J_{m_{\ell'},n_{\ell'}}}(H_{0}(k))$ and $\chi_{m_{\ell'},n_{\ell'}}(H_{0}(k))$ are {\red equal} for $k\in \omega_{m_{\ell}}\cap \omega_{m_{\ell'}}$ when $1\leq \ell\leq \ell'\leq L$\,.
  In particular all the operators  $\chi_{m_{\ell},n_{\ell}}(H_{0}(k))\chi_{m_{L},n_{L}}(H_{0}(k))$\,, $1_{J_{m_{\ell},n_{\ell}}}(H_{0}(k))1_{J_{m_{L},n_{L}}}(H_{0}(k))$\,, $1_{J_{m_{L},n_{L}}}(H_{0}(k))$ and $\chi_{m_{L},n_{L}}(H_{0}(k))$ are {\red equal} for $k\in \bigcap_{\ell'=1}^{L} \omega_{m_{\ell'}}$ for all $\ell\in \left\{1,\ldots, L\right\}$\,.
\end{proposition}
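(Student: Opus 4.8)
The plan is to reduce everything to the order structure on $\mathcal{S}$ together with Lemma~\ref{le:lefttri}. First I would record that the hypothesis $\bigcap_{\ell=1}^{L}\Omega_{m_\ell,n_\ell}\cap\Sigma\neq\emptyset$ forces, for every pair $\ell,\ell'$, the pairwise intersection $\Omega_{m_\ell,n_\ell}\cap\Omega_{m_{\ell'},n_{\ell'}}\cap\Sigma$ to be nonempty, since it contains the full intersection. Hence, by the trichotomy displayed just before the statement (which is exactly Proposition~\ref{pr:covering}-\textit{(7)} rephrased), for every pair $\ell,\ell'$ one has $\overline{S_{m_\ell,n_\ell}}\subset\overline{S_{m_{\ell'},n_{\ell'}}}$ or $\overline{S_{m_{\ell'},n_{\ell'}}}\subset\overline{S_{m_\ell,n_\ell}}$. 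In other words, the finite collection $\{S_{m_\ell,n_\ell}\,:\,1\le\ell\le L\}$ consists of pairwise comparable elements for the relation $\overline{S}\subset\overline{S'}$.

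Next I would use that $\overline{S}\subset\overline{S'}$ is a genuine order relation on $\mathcal{S}$: it is transitive, and it is antisymmetric because $\overline{S}=\overline{S'}$ with $S\neq S'$ would give both $\dim S<\dim S'$ and $\dim S'<\dim S$ (strata are disjoint, so $S\subset\partial S'$ and $S'\subset\partial S$). A finite set all of whose elements are pairwise comparable for such a relation can be listed in nondecreasing order (ties, coming from indices $\ell\neq\ell'$ with $S_{m_\ell,n_\ell}=S_{m_{\ell'},n_{\ell'}}$, do not obstruct the sorting). Relabelling the family accordingly we obtain $\overline{S_{m_\ell,n_\ell}}\subset\overline{S_{m_{\ell+1},n_{\ell+1}}}$ for all $\ell$, and by transitivity $\overline{S_{m_\ell,n_\ell}}\subset\overline{S_{m_{\ell'},n_{\ell'}}}$ for all $1\le\ell\le\ell'\le L$. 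This proves the first assertion.

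For the operator identities I would invoke the last sentence of Lemma~\ref{le:lefttri} (its "more general condition" version) for each pair $\ell\le\ell'$: the hypotheses $\Omega_{m_\ell,n_\ell}\cap\Omega_{m_{\ell'},n_{\ell'}}\cap\Sigma\neq\emptyset$ and $\overline{S_{m_\ell,n_\ell}}\subset\overline{S_{m_{\ell'},n_{\ell'}}}$ hold by the previous two paragraphs, so the four operators $\chi_{m_\ell,n_\ell}(H_0(k))\chi_{m_{\ell'},n_{\ell'}}(H_0(k))$, $1_{J_{m_\ell,n_\ell}}(H_0(k))1_{J_{m_{\ell'},n_{\ell'}}}(H_0(k))$, $1_{J_{m_{\ell'},n_{\ell'}}}(H_0(k))$ and $\chi_{m_{\ell'},n_{\ell'}}(H_0(k))$ coincide for $k\in\omega_{m_\ell}\cap\omega_{m_{\ell'}}$; the case $\ell=\ell'$ is trivial, using $\chi_{m,n}(H_0(k))=1_{J_{m,n}}(H_0(k))$ for $k\in\omega_m$ (from Proposition~\ref{pr:covering}-\textit{(4)} and the choice of $\chi_{m,n}$ as $\equiv 1$ near $\overline{J'_{m,n}}$). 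The ``in particular'' statement is then the special case $\ell'=L$, after the trivial observation $\bigcap_{\ell'=1}^{L}\omega_{m_{\ell'}}\subset\omega_{m_\ell}\cap\omega_{m_L}$.

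The only genuinely delicate point is the first step: one must make sure that pairwise comparability, together with antisymmetry and transitivity of $\overline{S}\subset\overline{S'}$, really yields a linear order on the (possibly repetitive) family so that the sorting is legitimate, and that no hypothesis stronger than pairwise nonempty intersections in $\Sigma$ is secretly needed — which is precisely why the full intersection hypothesis is only ever used through its pairwise consequences. Everything after the sorting step is a direct, essentially bookkeeping, application of Lemma~\ref{le:lefttri}.
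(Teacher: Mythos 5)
Your proof is correct and follows essentially the same route the paper intends: the paper states the proposition immediately after observing that $\overline{S}\subset\overline{S'}$ is an order relation on $\mathcal{S}$ and that pairwise nonempty intersections give pairwise comparability, and relies on the last sentence of Lemma~\ref{le:lefttri}. You merely spell out the details the paper leaves implicit (reduction to pairwise intersections, antisymmetry of the order via the frontier condition and dimension drop, and the sorting of a finite chain with possible ties), all of which are accurate.
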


\subsection{Unitary connections}
\label{sec:connections}
A connection $\nabla$\,, on a Hilbert bundle $p_{\mathcal{F}}:\mathcal{F}\to  M$ is a $\rz$ (or $\cz$)-linear map from  $\mathcal{C}^{\infty}(M;\mathcal{F})$ to $\mathcal{C}^{\infty}(M;T^{*}M\otimes \mathcal{F})$ with the {\red Leibniz} rule
$$
\forall f\in \mathcal{C}^{\infty}(M;\rz),\forall s\in \mathcal{C}^{\infty}(M;\mathcal{F})\,,\quad \nabla (fs)=(df) s+f(\nabla s)
$$
or in terms of covariant derivatives 
$$
\forall X\in \mathcal{C}^{\infty}(M;TX),\, \forall f\in \mathcal{C}^{\infty}(M;\rz),\forall s\in \mathcal{C}^{\infty}(M;\mathcal{F})\,,\quad \nabla_{X} (fs)=(Xf) s+f(\nabla_{X})s\,.
$$
The curvature $R^{\nabla}\in $ is given by
$$
R^{\nabla}(X,Y)=\nabla_{X}\nabla_{Y}-\nabla_{Y}\nabla_{X}-\nabla_{[X,Y]}
$$
and it defines an element of $\mathcal{C}^{\infty}(M;\Lambda^{2}T^{*}M\otimes \mathrm{End}(\mathcal{F}))$\,, where in all our cases $\dim \mathcal{F}_{k}<\infty$ or $\dim \mathcal{F}_{k}=\infty$\,, $\mathrm{End}(\mathcal{F})=\mathcal{L}(\mathcal{F})$\,.\\
In a local chart open set $\omega\subset M$\,, there exists a unitary map $U(k)$ in $\mathcal{H}'$ with depends analytically on $k\in \omega$ such that
$$
u(k)\mapsto e^{-V(k)}U(k)u(k)
$$
defines a unitary map from $L^{2}(\omega,dk;\mathcal{H}')$ to $\mathcal{F}\big|_{\omega}$ endowed with the metric $\langle~,~\rangle_{\mathcal{H}'}e^{2V(k)}$\,.
Hence on $\mathcal{F}\big|_{\omega}$ there always exists a flat unitary connection given by
\begin{equation}
  \label{eq:connectriv}
\nabla^{\omega}=e^{-V(k)}U(k)[\sum_{i=1}^{\dim M}\frac{\partial}{\partial k_{i}}{dk_{i}}]e^{V(k)}U(k)^{-1}=U(k)\left[\sum_{i=1}^{\dim M}\frac{\partial}{\partial k_{i}}dk_{i}\right]U(k)^{-1}+dV\otimes \mathrm{Id}_{\mathcal{F}}
\end{equation}
Here flat means $R^{\nabla^{\omega}}\equiv 0$ while the unitarity of the Hilbert bundle with fiber $\mathcal{H}'$ and the metric $\langle~,~\rangle_{\mathcal{H}'}e^{2V(k)}$ means
$$
X\left[\langle s\,,\,s'\rangle_{\mathcal{H}'}e^{2V(k)}\right]=\langle \nabla^{\omega}_{X}s\,,\, s'\rangle_{\mathcal{H}'}e^{2V(k)}+\langle s\,,\,\nabla^{\omega}_{X}s'\rangle e^{2V(k)}\,.
$$
In particular when $X=\sum_{i=1}^{\dim M}X_{i}\partial_{k_{i}}\in \mathcal{C}^{\infty}(\omega;TM)$\,, $\nabla^{\omega}_{X}+\frac{1}{2}\mathrm{div}\,X$\,, with $\mathrm{div}\,X=\sum_{i=1}^{\dim M}\partial_{k_{i}}X_{i}$\,, is an antisymmetric operator on $\mathcal{C}^{\infty}_{comp}(\omega;\mathcal{F})$\,.\\
Let $M=\bigcup_{i\in \mathcal{I}}\omega_{i}$ be a locally finite open covering of $M$\,, with $\overline{\omega}_{i}$ compact, and let $\sum_{i\in \mathcal{I}}\chi_{i}^{2}$\,, $\chi_{i}\in \mathcal{C}^{\infty}_{comp}(\omega_{i})$ be a subordinate partition of unity with $\sum_{i\in \mathcal{I}}\chi_{i}^{2}\equiv 1$\,.  When $(\nabla_{i})_{i\in \mathcal{I}}$ is a family of unitary connections on $\mathcal{F}\big|_{\omega_{i}}$ then
\begin{equation}
  \label{eq:nablaM}
\nabla^{M}=\sum_{i\in \mathcal{I}}\chi_{i}\nabla_{i}\chi_{i}
\end{equation}
is a unitary connection on $\mathcal{F}$\,.\\
A connection $\nabla^{M}$ on $p_{\mathcal{F}}:\mathcal{F}\to M$\,, allows to define a connection still denoted by $\nabla^{M}$ on $p_{\mathcal{L}(\mathcal{F})}:\mathcal{L}(\mathcal{F})\to M$ via
$$
(\nabla^{M}_{X}A)s=\nabla^{M}_{X}(As)-A(\nabla^{M}_{X}s)
$$
which ensures the Leibnitz rule $\nabla^{M}(As)=(\nabla^{M} A)s+A(\nabla^{M} s)$\,.
The unitarity of $\nabla^{M}$ implies $(\nabla^{M}_{X}A)^{*}=-\nabla^{M}_{X}(A^{*})+\lambda_{X}A^{*}$\, {\red where the factor $\lambda_{X}A^{*}$ is due to the differentiation of the  volume form $dv(k)$ in the integration by part.}\\
Remember also that the difference between two connections on $p_{\mathcal{F}}:\mathcal{F}\to M$ is given by
$\nabla^{2}-\nabla^{1}=L\in \mathcal{C}^{\infty}(M;T^{*}M\otimes \mathcal{L}(\mathcal{F}))$\,. This holds true when $\mathcal{F}$ is finite dimensional and when $\dim \mathcal{F}=\infty$ the considered connections $\nabla^{2}$ are the ones for which the property holds for a given fixed connection $\nabla^{1}$\,, e.g. $\nabla^{1}=\nabla^{M}=\sum_{i\in \mathcal{I}}\chi_{i}\nabla^{i}\chi_{i}$\,. A typical example is when $\nabla^{2}$ is given by the same formula as $\nabla^{1}=\nabla^{M}$ but with a different partition of unity $\sum_{i\in \mathcal{I}}\chi_{2,i}^{2}\equiv 1$ subordinate to $M=\bigcup_{i\in \mathcal{I}}\omega_{i}$\,. The induced connections on $p_{\mathcal{L}(\mathcal{F})}:\mathcal{L}(\mathcal{F})\to M$ satisfy  $(\nabla^{2}_{X}-\nabla^{1}_{X})A=[L_{X},A]$\,. In particular, the factor $e^{2V(k)}$ in the metric $\langle~,~\rangle_{\mathcal{H}'}e^{2V(k)}$ can be forgotten while considering the connection $\nabla^{\omega}$ induced on $p_{\mathcal{L}(\mathcal{F})}:\mathcal{L}(\mathcal{F})\to \omega$ with $\nabla^{\omega}$ given by
\eqref{eq:connectriv} because $L_{X}=(dV(X))\mathrm{Id}_{\mathcal{H}'}$\,.  When both $\nabla^{2}$ and $\nabla^{1}$ are unitary connections, $L_{X}^{*}=(\nabla^{2}_{X}-\nabla_{X}^{1})^{*}=-(\nabla^{2}_{X}-\nabla^{1}_{X})=-L_{X}$ and $L_{X}$ is anti-symmetric.\\
In a local coordinate system $(k_{1},\ldots,k_{d})$ in $\omega\subset M$\,, the following relations
\begin{eqnarray}
  \label{eq:curvkikj}
  && \left[\nabla_{\frac{\partial}{\partial k_{i}}},\nabla_{\frac{\partial}{\partial k_{j}}}\right]=R^{\nabla}(\frac{\partial}{\partial k_{i}},\frac{\partial}{\partial k_{j}})\in \mathcal{C}^{\infty}(\omega;\mathcal{L}(\mathcal{F}))\,,\\
  \label{eq:conn21ki}
  && \nabla^{2}_{\frac{\partial}{\partial k_{i}}}-\nabla^{1}_{\frac{\partial}{\partial k_{i}}}\in \mathcal{C}^{\infty}(\omega;\mathcal{L}(\mathcal{F}))\,,
\end{eqnarray}
ensure that the following objects are well defined.
\begin{definition}
  \label{de:firstorder}
  Let $\omega$ be an open set of $M$\,.
  A differential operator $B$ of order $\mu$ with coefficients in $\mathcal{C}^{\infty}(\omega;\mathcal{L}(\mathcal{F}))$ is a local operator which can be written in any local coordinate system
  \begin{equation}
    \label{eq:diffopmu}
B=\sum_{|\alpha|\leq \mu}B_{\alpha}(k)\nabla_{\frac{\partial}{\partial k_{1}}}^{\alpha_{1}}\cdots \nabla_{\frac{\partial}{\partial k_{d}}}^{\alpha_{d}}\,.
\end{equation}
It is initially defined with the domain $D(B)=\mathcal{C}^{\infty}_{comp}(\omega;\mathcal{F})$ and it is said symmetric according to the general definition
$$
\forall \varphi,\psi\in \mathcal{C}^{\infty}_{comp}(\omega;\mathcal{F}),\quad
\langle \varphi\,,\, B\psi \rangle_{L^{2}(M;\mathcal{F})}=\langle B\varphi\,,\,\psi\rangle_{L^{2}(M;\mathcal{F})}\,.
$$
A first order differential operator $B$ with $\mathcal{C}^{\infty}(\omega;\mathcal{L}(\mathcal{F}))$ coefficients, is said to have a scalar principal part if there exists $X\in \mathcal{C}^{\infty}(\omega;TM)$ and $B_{0}\in \mathcal{C}^{\infty}(\omega;\mathcal{L}(\mathcal{F}))$ such that
\begin{equation}
  \label{eq:1stscalprinc}
B=\nabla_{X}+B_{0}\,.
\end{equation}
\end{definition}
In \eqref{eq:diffopmu} the covariant derivatives do not commute but \eqref{eq:curvkikj} ensures that the form \eqref{eq:diffopmu} is preserved, after modifying the coefficients $B_{\alpha}$\,, when the order is changed. The relation \eqref{eq:conn21ki} ensures that the general forms \eqref{eq:diffopmu} and \eqref{eq:1stscalprinc} are not changed when the connection $\nabla$ is changed.\\
We will focus on first order differential operators with $\mathcal{C}^{\infty}(\omega;\mathcal{L}(\mathcal{F}))$ coefficients where $\omega$ and the fiber bundle will be specified further.\\
When $\omega$ is a local chart open set with $\mathcal{F}\big|_{\omega}=e^{-V(.)}U(.)L^{2}(\omega;\mathcal{H}')$ and $\nabla^{\omega}$ defined by \eqref{eq:connectriv}, a first order differential operator can be written 
$$
B=e^{-V(k)}U(k)\left[\sum_{i=1}^{d}\underbrace{B_{1,i}(k)X_{i}(k)}_{=\tilde{B}_{1,i}(k)}\frac{\partial}{\partial_{k_{i}}}+B_{0}(k)\right]e^{V(k)}U(k)^{-1}
$$
with $B_{1,i},B_{0},\tilde{B}_{1,i}\in \mathcal{C}^{\infty}(\omega;\mathcal{L}(\mathcal{H}'))$ and $X=\sum_{i=1}^{k}X_{i}(k)\frac{\partial}{\partial_{k_{i}}}\in \mathcal{C}^{\infty}(\omega;TM)$\,.  It has a scalar principal part when all the $B_{1i}$'s equal $\mathrm{Id}_{\mathcal{H}'}$\,.\\
The compositions $BA$ or $AB$ with $B=B_{1}\nabla^{\omega}_{X}+B_{0}$\,, $B_{1},B_{0}\in \mathcal{C}^{\infty}(\omega;\mathcal{L}(\mathcal{F}))$ and $A\in \mathcal{C}^{\infty}(\omega;\mathcal{L})$ equal
$$
BA=B_{1}A\nabla^{\omega}_{X}+(B_{1}(\nabla^{\omega}_{X}A))+B_{0}A\quad\text{and}\quad
AB=AB_{1}\nabla^{\omega}_{X}+AB_{0}\,,
$$
and the commutator $[B,A]$ belongs to $\mathcal{C}^{\infty}(\omega;\mathcal{L}(\mathcal{F}))$ when $[B_{1},A]=0$\,, which is the case if $B$ has a scalar principal part.\\
When $[B^{1}_{1},B^{2}_{1}]=0$\,, the commutator (on $\mathcal{C}^{\infty}(\omega;\mathcal{F})$) $[B^{1},B^{2}]$ with $B^{k}=B^{k}_{1}\nabla^{\omega}_{X^{k}}+B^{k}_{0}$ is a first order differential operators with $\mathcal{C}^{\infty}(\omega;\mathcal{L}(\mathcal{F}))$-coefficients:
\begin{eqnarray*}
  [B^{1},B^{2}]&=&(B^{1}_{1}(\nabla^{\omega}_{X^{1}}B^{2}_{1}))\nabla^{\omega}_{X^{2}}
                -(B^{2}_{1}(\nabla^{\omega}_{X^{2}}B^{1}_{1}))\nabla^{\omega}_{X^{1}}
                   +(B_{1}^{1}B^{2}_{1})\nabla^{\omega}_{[X^{1},X^{2}]}
  \\
               &&+[B^{1}_{0},B^{2}_{0}]+(B^{1}_{1}\nabla^{\omega}_{X^{1}}B^{2}_{0})-(B^{2}_{1}\nabla^{\omega}_{X^{2}}B^{1}_{0})
     +B^{1}_{1}B^{2}_{1}R^{\nabla^{\omega}}(X^{1},X^{2})\,,
\end{eqnarray*}
and this is not true in general when $[B^{1}_{1},B^{2}_{1}]\neq 0$\,.\\
When $\nabla^{\omega}$ is a unitary connection the formal adjoint of $\red B=B_{1}\nabla^{\omega}_{X}+B_{0}$ equals
$$
B^{*}=-\nabla_{X}^{\omega}B_{1}^{*}-(\mathrm{div}\,X) B_{1}^{*}+B_{0}^{*}=-B_{1}^{*}\nabla^{\omega}_{X}+B_{0}^{*}-(\nabla^{\omega}_{X}B_{1}^{*}+(\mathrm{div}\,X) B_{1}^{*})\,.
$$
First order differential operators and their adjoint are fully understood by their local description with a unitary connection $\nabla^{\omega}$ and we can start with \eqref{eq:connectriv}. This will be applied with $\omega=\omega_{m}$\,, $1\leq m\leq M_{I}$\,. In order to use the localization on $\Sigma$ in the variables $(k,\lambda)$\,, it is convenient to associate a new unitary connection  with a finite orthogonal decomposition of the fiber bundle $p_{\mathcal{F}}:\mathcal{F}\to \omega$\,. Namely assume that $(\pi_{n}(k))_{0\leq n\leq N}$ are orthogonal projections in the fiber $\mathcal{F}_{k}$\,, with $\sum_{n=0}^{N}\pi_{n}(k)=\mathrm{Id}_{\mathcal{F}_{k}}$\,, which depend analytically on $k$ (actually $\mathcal{C}^{\infty}$-regularity is only used here). The latter means that $\tilde{\pi}_{n}(k)=U(k)^{-1}\pi_{n}(k)U(k)$ is an orthogonal projection in $\mathcal{H}'$ which is analytic with respect to $k\in \omega$\,, while we know
$$
\pi_{n}(k)^{2}=\pi_{n}(k)
=\pi_{n}(k)^{*}\quad,\quad \sum_{n=0}^{N}\pi_{n}(k)
=\mathrm{Id}_{\mathcal{F}_{k}}\,.
$$
When $\nabla^{\omega}$ is a unitary connection on $p_{\mathcal{F}}:\mathcal{F}\to \omega$\,, endowed with the metric $\langle \,,\,\rangle_{\mathcal{H}'}e^{2V(k)}$\,, this induces a natural connection $\nabla^{\omega,n}$ on $p_{\pi_{n}\mathcal{F}}:\pi_{n}\mathcal{F}\to \omega$ simply given by
$$
\nabla^{\omega,n}=\pi_{n}\circ \nabla^{\omega} \circ \pi_{n}=
\left(\nabla^{\omega} -(\nabla^{\omega} \pi_{n})\right)\big|_{\mathcal{C}^{\infty}(\omega;\pi_{n}\mathcal{F})}=\left(\nabla -\left[(\nabla^{\omega} \pi_{n}),\pi_{n}\right]\right)\big|_{\mathcal{C}^{\infty}(\omega;\pi_{n}\mathcal{F})}\,,
$$
and sometimes called the \emph{adiabatic connection.} The last equality is actually due to
$$
\nabla^{\omega}_{X}(\pi_{n})=\nabla^{\omega}_{X}(\pi_{n}^{2})=(\nabla^{\omega}_{X}\pi_{n})\pi_{n}+\pi_{n}(\nabla^{\omega}_{X}\pi_{n})
$$
which implies 
$$
\pi_{n}(\nabla^{\omega}_{X}\pi_{n})\pi_{n}=0\,.
$$
Then
$$
\nabla^{\omega,\pi}=\oplus_{n=0}^{N}\nabla^{\omega,n}=\oplus_{n=0}^{N}\pi_{n}\nabla^{\omega} \pi_{n}
=\nabla^{\omega}-\sum_{n=1}^{N}(\nabla^{\omega} \pi_{n})\pi_{n}
$$
is a new unitary connexion, $\nabla^{\omega,\pi}$\,, such that for any block diagonal $A\in \mathcal{C}^{\infty}(\omega;\mathcal{L}(\mathcal{F}))$\,, $A=\sum_{n=1}^{N}\pi_{n}A_{n}\pi_{n}$\,,
\begin{equation}
  \label{eq:blockconnex}
\nabla^{\omega,\pi}A=\sum_{n=0}^{N}\pi_{n}\nabla^{\omega} A_{n}\pi_{n}=\sum_{n=0}^{N}\pi_{n}\nabla^{\omega,n}A_{n}\pi_{n}\,,
\end{equation}
and $\nabla_{X}^{\omega,\pi}A$ remains block diagonal for any $X\in \mathcal{C}^{\infty}(\omega;TM)$\,.\\
Actually any finite collection of unitary connections $(\nabla^{\omega,n})_{0\leq n\leq N}$ on the fiber bundles $p_{\pi_{n}\mathcal{F}}:\pi_{n}\mathcal{F}\to \omega$\,, endowed with the metric $\langle~\,,\,~\rangle_{\mathcal{H}'}e^{2V(k)}$\,, gives rise to a unitary connection $\nabla^{\omega,\pi}=\sum_{n=1}^{N}\pi_{n}\nabla^{\omega,n}\pi_{n}$ on $p_{\mathcal{F}}:\mathcal{F}\to \omega$ with the same property \eqref{eq:blockconnex}. Such a unitary connection $\nabla^{\omega,\pi}$ or alternatively $\nabla^{\omega}=\nabla^{\omega,\pi}$ is characterized by
$$
\forall n\in \left\{1,\ldots,N\right\}\,,\quad \nabla^{\omega}\pi_{n}=0\quad\text{in}\quad \mathcal{C}^{\infty}(\omega;\mathcal{L}(\mathcal{F}))
$$
if we notice $\pi_{0}=\mathrm{Id}_{\mathcal{F}}-\sum_{n=1}^{N}\pi_{n}$\,.

A first version of the operator $A_{I}$\,, essentially as presented in \cite{GeNi} is done as follows.
We use the open covering $\bigcup_{(m,n)\in L(I)}\Omega_{m,n}$\,, $\red\Omega_{m,n}=\omega_{m}\times J_{m,n}$\,, of $p_{\Sigma\to\rz}^{-1}(\overline{I})$ given by
Proposition~\ref{pr:covering}.
Above the open set $\omega_{m}$\,, $1\leq m\leq M_{I}$\,, use the decomposition $\mathrm{Id}_{\mathcal{F}_{k}}=\oplus_{n=1}^{N_{m}}1_{J_{m,n}}(H_{0}(k))\oplus 1_{J_{m,0}}(H_{0}(k))=\oplus_{n=0}^{N_{m}}\pi_{m,n}(k)$\,, with $J_{m,0}=\rz\setminus \bigcup_{n=1}^{N_{m}}J_{m,n}$ and
\begin{equation}
  \label{eq:defpimn}
  \pi_{m,n}(k)=1_{J_{m,n}}(H_{0}(k))\,,\quad 0\leq n\leq N_{m}\,,~1\leq m\leq M_{I}\,.
\end{equation}
Consider the unitary connection $\nabla^{\omega_{m},\pi}=\sum_{n=0}^{N_{m}}\pi_{m,n}\nabla^{m,n} \pi_{m,n}$\,, with $\nabla^{m,n}$ a unitary connection defined on $p_{\pi_{m,n}\mathcal{F}}:\pi_{m,n}\mathcal{F}\to \omega_{m}$ by 
\begin{itemize}
\item for $n=0$\,, $\nabla^{m,0}=\pi_{m,0}\nabla^{\omega_{m}}\pi_{m,0}$ with $\nabla^{\omega_{m}}$ defined by \eqref{eq:connectriv} with $\omega=\omega_{m}$\,,
\item for $n\in \left\{1,\ldots, N_{m}\right\}$\,, $\nabla^{m,n}_{X}=W_{m,n}XW_{m,n}^{*}+dV(X)$ where $W_{m,n}(k)$ is the unitary map from $\cz^{\mu_{m,n}}\sim \pi_{m,n}\mathcal{F}_{k_{0}}$ to $\pi_{m,n}\mathcal{F}_{k}$  introduced in Proposition~\ref{pr:covering}-\textit{(5)}.
\end{itemize}
We consider the operator
\begin{eqnarray}
\label{eq:defAm}
  A_{m}&=&i\sum_{n=0}^{N_{m}}g_{m}(k)\left[\nabla^{m,n}_{X_{m,n}}+\frac{1}{2}(\mathrm{div}\,X_{m,n})\pi_{m,n}\right]g_{m}(k)\\
  &=&i\sum_{n=1}^{N_{m}}g_{m}(k)\chi_{m,n}(H_{0}(k))\left[\nabla^{\omega_{m},\pi}_{X_{m,n}}+\frac{1}{2}(\mathrm{div}\,X_{m,n})\right]\chi_{m,n}(H_{0}(k))g_{m}(k)
\end{eqnarray}
where $X_{m,0}=0$ and $X_{m,n}$ for $1\leq n\leq N_{m}$ is the real analytic
vector field introduced in Proposition~\ref{pr:covering}-\textit{6)}
such that
\begin{equation}
  \label{eq:pptyXkmn}
\forall k\in \omega_{m}\,,\quad \nabla^{m,n}_{X_{m,n}}\left(H_{0}(k)1_{J_{m,n}}(k)\right)\geq \frac{1}{2}1_{J_{m,n}}(H_{0}(k))\,.
\end{equation}
The right-hand side of \eqref{eq:defAm} shows that $A_{m}$ is a first order differential operator with $\mathcal{C}^{\infty}_{comp}(\omega_{m};\mathcal{L}(\mathcal{F}))$ coefficients according to the general setting of Definition~\ref{de:firstorder}. Because $\nabla^{\omega_{m},\pi}$ is a unitary connection, $A_{m}$ is symmetric on $\mathcal{C}^{\infty}_{comp}(M;\mathcal{F})$ while the commmutator $[H_{0},iA_{m}]$ satisfies
$$
\left[H_{0},iA_{m}\right]=g_{m}(k)\left[\sum_{n=1}^{N_{m}}
\nabla^{m,n}_{X_{m,n}}\left(H_{0}(k)1_{J_{m,n}}(H_{0}(k)\right)\right]g_{m}(k)\geq \frac{1}{2}g_{m}^{2}(k)1_{\overline{I}}(H_{0}(k))\,.
$$
Clearly the operator $[H_{0},iA_{m}]=\sum_{n=1}^{N_{m}}\pi_{m,n}B_{m,n}\pi_{m,n}$ with $B_{m,n}\in \mathcal{C}^{\infty}_{comp}(\omega_{m}; \mathcal{L}(\pi_{m,n}\mathcal{F}))$\,, $\supp B_{m,n}\subset \supp g_{m}$\,, and iterated commutators $\mathrm{ad}_{iA_{m}}^{k}$ preserve this class.\\
By summing $A_{I}=\sum_{m=1}^{M_{I}}A_{m}$ we obtain
$$
\left[H_{0},iA_{I}\right]\geq \frac{1}{2}\sum_{m=1}^{M_{I}}g_{m}(k)^{2}1_{\overline{I}}(H_{0}(k))=\frac{1}{2} 1_{\overline{I}}(H_{0})\,.
$$
So everything seems to be done here, and this is where we definitely omitted an argument in \cite{GeNi}\,. Actually when $\omega_{m_{1}}\cap \omega_{m_{2}}\neq \emptyset$ or more precisely when $g_{m_{1}}g_{m_{2}}\not\equiv 0$\,, the block diagonal decomposition $\oplus_{n=0}^{N_{m}}1_{J_{m,n}}(H_{0}(k))$  do not coincide for $m=m_{1}$ and $m=m_{2}$\,. While considering the iterated commutator $[A_{m_{j}},[A_{m_{1}},H_{0}]]$\,, $j=1,2$\,, we must check that $[A_{m_{1}},H_{0}]$
has the local structure $\sum_{n=1}^{N_{m}}\pi_{m,n}B_{m,n}\pi_{m,n}$\,, $B_{m,n}\in \mathcal{C}^{\infty}_{comp}(\omega_{m};\mathcal{L}(\pi_{m,n}\mathcal{F}))$\,, both for $m=m_{1}$ and $m=m_{2}$\,. And this is not true in general. This can be corrected by modifying the unitary connections $\nabla^{\omega_{m},\pi}$. But the additional terms deteriorate Mourre's inequality which can be finally recovered only on {\red a} small energy interval $[\lambda-\varepsilon,\lambda+\varepsilon]$\,, $\lambda\in \overline{I}$ and $\varepsilon>0$ small enough. This is done in the following sections.

\section{Modified unitary connection}
\label{sec:modconn}
Here we change the unitary connection $\nabla^{\omega_{m},\pi}=\sum_{n=0}^{N_{m}}\pi_{m,n}\nabla^{\omega_{m}}\pi_{m,n}$ into a new unitary connection $\tilde{\nabla}^{\omega_{m},\pi}=\sum_{n=0}^{N_{m}}\pi_{m,n}\tilde{\nabla}^{m,n}\pi_{m,n}$ {\red whose} the construction depends on the partition of unity \eqref{eq:partunitM}\eqref{eq:partunitSig}\eqref{eq:chi1J}. Remember the definition~\eqref{eq:defpimn} of $\pi_{m,n}$\,. Actually it suffices to specify the new connection $\tilde{\nabla}^{m,n}$ on the fiber bundle $p_{\pi_{m,n}\mathcal{F}}:\pi_{m,n}\mathcal{F}\to \omega_{m}$\,, for every fixed pair $(m,n)$\,, but all the possible non empty intersections $\omega_{\alpha}=\bigcap_{m'\in \alpha}\omega_{m'}$\,, for $m\in \alpha\subset \left\{1,\ldots, M_{I}\right\}$\,, $\emptyset\neq \omega_{\alpha}\subset \omega_{m}$\,, must be considered.  The new operator $\tilde{A}_{I}$ defined like $A_{I}=\sum_{m=1}^{M_{I}}A_{m}$ with this new unitary connection will satisfy for all pairs $(m',n')\in L(I)$\,,
\begin{equation}
  \label{eq:commBmn}
\mathrm{ad_{\tilde{A}_{I}}}B_{m',n'}\in \mathcal{C}^{\infty}_{comp}(\omega_{m'};\mathcal{L}(\pi_{m',n'}\mathcal{F}))\quad\text{with}\quad \supp \mathrm{ad_{\tilde{A}_{I}}}B_{m',n'}\subset \supp g_{m'}
\end{equation}
for any $B_{m',n'}\in \mathcal{C}^{\infty}_{comp}(\omega_{m'};\mathcal{L}(\pi_{m',n'}\mathcal{F}))$\,,  with $\supp B_{m',n'}\subset \supp g_{m'}\Subset \omega_{m'}$\,, and we will deduce
$$
\forall k\in \nz\,,\quad \mathrm{ad}_{i\tilde{A}_{I}}^{k}H_{0}\in \mathcal{L}(L^{2}(M;\mathcal{F}))\,.
$$
For the moment we still work with first order differential operators with $\mathcal{C}^{\infty}(M;\mathcal{L}(\mathcal{F}))$ coefficients (see Definition~\ref{de:firstorder}),  acting on $\mathcal{C}^{\infty}_{comp}(M;\mathcal{F})$\,. The question of their self-adjoint realizations as well as the functional analytic meaning of commutators will be discussed later.\\
By expanding \eqref{eq:commBmn} with
$$
\tilde{A}_{I}=i\sum_{(m,n)\in L(I)}g_{m}(k)\bigg(\tilde{\nabla}_{X_{m,n}}^{m,n}+\frac{(\mathrm{div}\,X_{m,n})}{2}\pi_{m,n}\bigg)g_{m}(k) \quad (\text{remember}~X_{m,0}=0)
$$
the commutator equals
\begin{eqnarray*}
&&[i\tilde{A}_{I},B_{m',n'}]=-\sum_{(m,n)\in L(I)}g_{m}(k)\left[\bigg(\tilde{\nabla}_{X_{m,n}}^{m,n}+\frac{(\mathrm{div}\,X_{m,n})}{2}\pi_{m,n}\bigg),B_{m',n'}\right]g_{m}(k)\\
&&\hspace{1cm}= -\sum_{\tiny
  \begin{array}[t]{c}
    \Omega_{m,n}\cap \Omega_{m',n'}\cap \Sigma\neq \emptyset\\
  (m,n)\in L(I)\end{array}
  }g_{m}(k)\left[\bigg(\tilde{\nabla}_{X_{m,n}}^{m,n}+\frac{(\mathrm{div}\,X_{m,n})}{2}\pi_{m,n}\bigg),\pi_{m',n'}B_{m',n'}\pi_{m',n'}\right]g_{m}(k)\,.
\end{eqnarray*}
for any $(m',n')\in L(I)$\,, where we recall $\Omega_{m,n}=\omega_{m}\times J_{m,n}$ for $(m,n)\in L(I)$ as in Proposition~\ref{pr:covering} with $\supp g_{m}\subset \omega_{m}$ and $1_{J_{m,n}}(H_{0}(k))=\pi_{m,n}(k)$ for $k\in \omega_{m}$\,. 
In the above sum, expand  the commutator $[\tilde{\nabla}^{m,n}_{X_{m,n}},\pi_{m',n'}B_{m',n'}\pi_{m',n'}]$ {\red as:}
\begin{equation*}
  [\tilde{\nabla}^{m,n}_{X_{m,n}},\pi_{m',n}]B_{m',n'}\pi_{m',n'}+
  \pi_{m',n'}[\tilde{\nabla}^{m,n}_{X_{m,n}},B_{m',n}]\pi_{m',n'}+
  \pi_{m',n'}B_{m',n}[\tilde{\nabla}^{m,n}_{X_{m,n}}\,,\pi_{m',n'}]
\end{equation*}
while, as operator valued differential operators on $\omega_{m}\cap \omega_{m'}$\,,
\begin{eqnarray*}
[\tilde{\nabla}^{m,n}_{X_{m,n}},\pi_{m',n'}]&=&\pi_{m,n}\tilde{\nabla}_{X_{m,n}}^{m,n}\pi_{m,n}\pi_{m',n'}-\pi_{m',n'}\pi_{m,n}\tilde{\nabla}_{X_{m,n}}^{m,n}\pi_{m,n}
  \\
&=&  \left[\tilde{\nabla}^{m,n}_{X_{m,n}}, \pi_{m,n}\pi_{m',n'}\right]
=\tilde{\nabla}^{m,n}_{X_{m,n}}(\pi_{m,n}\pi_{m',n'})\,.
\end{eqnarray*}
Therefore the condition \eqref{eq:commBmn} is satisfied when
\begin{equation}
  \label{eq:annulconn}
  \tilde{\nabla}^{m,n}(\pi_{m,n}\pi_{m',n'})=0 \quad\text{above}~\supp g_{m}\cap \supp g_{m'}\,,
\end{equation}
for all pairs $(m,n)$ and $(m',n')$ in $L(I)$ such that $\Omega_{m,n}\cap \Omega_{m',n'}\cap \Sigma\neq \emptyset$\,.
This can be discussed by using the order introduced in Proposition~\ref{pr:order}  after the classification of cases in Proposition~\ref{pr:covering}-\textit{(7)} and Definition~\ref{de:asym}.\\
\begin{proposition}
  \label{pr:AI1I}
  Let $I$ and $\tilde{I}$ be fixed intervals such that $I\Subset \tilde{I}\subset \rz\setminus\tau$ and let the partition of unity given by \eqref{eq:partunitM}\,\eqref{eq:partunitSig}\,\eqref{eq:chi1J} after Proposition~\ref{pr:covering} be fixed.\\
  The unitary connections $(\tilde{\nabla}^{m,n})_{(m,n)\in L(I)}$ on the vector bundles $p_{\pi_{m,n}\mathcal{F}}:\pi_{m,n}\mathcal{F}\to \omega_{m}$ can be defined such that \eqref{eq:annulconn} holds true for all pairs $(m,n)$ and $(m',n')$ in $L(I)$\,.
  These connections satisy the additional property that when $(k_{0},\lambda_{0})\in \Omega_{m,n}\cap p_{\Sigma\to\rz}(\overline{I})\subset \Sigma$ and the set $\left\{(m',n')\in L(I)\,, (k_{0},\lambda_{0})\in \Omega_{m',n'}\right\}=\left\{(m_{\ell},n_{\ell})\,, 1\leq \ell\leq L\right\}$ is ordered like in Proposition~\ref{pr:order}\,, there exists an open neighborhood $\mathcal{V}_{k_{0}}$ of $k_{0}$ such that
$$
\pi_{m_{L},n_{L}}\circ\tilde{\nabla}^{m,n}\circ\pi_{m_{L},n_{L}}=\pi_{m_{L},n_{L}}\circ \nabla^{m,n}\circ \pi_{m_{L},n_{L}}\quad\text{on}~\mathcal{V}_{k_{0}}\,.
$$
\end{proposition}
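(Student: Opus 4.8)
The plan is to build, for each fixed $(m,n)\in L(I)$ separately, the connection $\tilde\nabla^{m,n}$ on $p_{\pi_{m,n}\mathcal{F}}:\pi_{m,n}\mathcal{F}\to\omega_m$ by patching together, with a partition of unity, local \emph{adiabatic} connections adapted to a suitable commuting family of spectral sub-projections, and then to read off \eqref{eq:annulconn} and the agreement with $\nabla^{m,n}$ from the structure of that family. The first observation is that \eqref{eq:annulconn} only needs to be checked in the nontrivial cases. Indeed, $\tilde\nabla^{m,n}$ being a unitary connection on $\pi_{m,n}\mathcal{F}$, it automatically annihilates the identity endomorphism $\pi_{m,n}$ of that bundle; and by Lemma~\ref{le:lefttri} together with the trichotomy of Proposition~\ref{pr:covering}-\textit{(7)}, whenever $(m',n')\vartriangleleft(m,n)$ or $S_{m,n}=S_{m',n'}$ one has $\pi_{m,n}\pi_{m',n'}=\pi_{m,n}$ on $\omega_m\cap\omega_{m'}$, so \eqref{eq:annulconn} is automatic. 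The only real constraints come from the \emph{superiors} of $(m,n)$, i.e. the pairs $(m',n')$ with $(m,n)\vartriangleleft(m',n')$, for which Lemma~\ref{le:lefttri}-\textit{(2)} gives $\pi_{m,n}\pi_{m',n'}=\pi_{m',n'}$ and $\mathrm{Ran}\,\pi_{m',n'}(k)\subset\mathrm{Ran}\,\pi_{m,n}(k)$ on $\omega_m\cap\omega_{m'}$; for these we must arrange $\tilde\nabla^{m,n}\pi_{m',n'}=0$ above $\supp g_m\cap\supp g_{m'}$.

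\textbf{Compatibility of the constraints.} The key structural point is that, for a fixed $(m,n)$, any two superiors $(m',n')$ and $(m'',n'')$ have commuting sub-projections on $\omega_{m'}\cap\omega_{m''}$: if $\Omega_{m',n'}\cap\Omega_{m'',n''}\cap\Sigma=\emptyset$ then $J_{m',n'}\cap J_{m'',n''}$ meets no $\mathrm{Spec}(H_0(k))$ for $k\in\omega_{m'}\cap\omega_{m''}$, so $\pi_{m',n'}\pi_{m'',n''}=0$ there; and if $\Omega_{m',n'}\cap\Omega_{m'',n''}\cap\Sigma\neq\emptyset$, Proposition~\ref{pr:covering}-\textit{(7)} gives $\pi_{m',n'}\pi_{m'',n''}\in\{\pi_{m',n'},\pi_{m'',n''}\}$ there, hence (being self-adjoint) the two projections commute. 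Consequently, on a small enough \emph{connected} neighborhood $\mathcal{V}_{k_0}\subset\omega_m$ of any $k_0$, the finitely many superiors $(m',n')$ of $(m,n)$ with $\overline{\mathcal{V}_{k_0}}\cap\supp g_m\cap\supp g_{m'}\neq\emptyset$ are all defined on $\mathcal{V}_{k_0}$ (shrink $\mathcal{V}_{k_0}$ so that it lies in every such $\omega_{m'}$, which is possible since $\supp g_{m'}\Subset\omega_{m'}$), their $\pi_{m',n'}$ form a commuting family of smooth orthogonal projections of constant rank, and together with $\pi_{m,n}$ they generate a refinement $\pi_{m,n}=\bigoplus_\beta Q^{k_0}_\beta$ into smooth orthogonal projections of constant rank, with every $\pi_{m',n'}$ a partial sum of the $Q^{k_0}_\beta$.

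\textbf{Local model and patching.} On $\mathcal{V}_{k_0}$ set $\tilde\nabla^{k_0}=\sum_\beta Q^{k_0}_\beta\,\nabla^{m,n}\,Q^{k_0}_\beta$, the adiabatic connection of the decomposition $\bigoplus_\beta Q^{k_0}_\beta$: it is a unitary connection on $\pi_{m,n}\mathcal{F}|_{\mathcal{V}_{k_0}}$ that annihilates each $Q^{k_0}_\beta$, hence each $\pi_{m',n'}$ occurring in the refinement, on $\mathcal{V}_{k_0}$. Moreover, if $(k_0,\lambda_0)\in\Omega_{m,n}\cap\Sigma$ and $(m_\ell,n_\ell)_{1\le\ell\le L}$ is the chain of all $(m',n')\in L(I)$ with $(k_0,\lambda_0)\in\Omega_{m',n'}$ ordered as in Proposition~\ref{pr:order}, then $\pi_{m_L,n_L}$ is the smallest of these projections, and after a further shrinking of $\mathcal{V}_{k_0}$ no superior of $(m,n)$ active near $k_0$ splits it (any active superior whose range meets $\mathrm{Ran}\,\pi_{m_L,n_L}$ near $k_0$ must, by the commutation above and Proposition~\ref{pr:covering}-\textit{(7)}, contain $\pi_{m_L,n_L}$); thus $\pi_{m_L,n_L}$ is exactly one block $Q^{k_0}_{\beta_0}$ and $\pi_{m_L,n_L}\tilde\nabla^{k_0}\pi_{m_L,n_L}=\pi_{m_L,n_L}\nabla^{m,n}\pi_{m_L,n_L}$ on $\mathcal{V}_{k_0}$. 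Now cover $\omega_m$ by such neighborhoods $\mathcal{V}_{k_0^j}$, take a locally finite subordinate partition of unity $\sum_j\theta_j^2\equiv1$ with $\supp\theta_j\subset\mathcal{V}_{k_0^j}$, and put $\tilde\nabla^{m,n}=\sum_j\theta_j\,\tilde\nabla^{k_0^j}\,\theta_j$. Since the $\tilde\nabla^{k_0^j}$ are unitary connections differing pairwise by antisymmetric tensorial terms, $\tilde\nabla^{m,n}$ is again a unitary connection on $\pi_{m,n}\mathcal{F}|_{\omega_m}$. At a point $k\in\supp g_m\cap\supp g_{m'}$ with $(m,n)\vartriangleleft(m',n')$, every $j$ with $\theta_j(k)\neq0$ satisfies $k\in\mathcal{V}_{k_0^j}\cap\supp g_m\cap\supp g_{m'}$, so $(m',n')$ is one of the superiors used at $k_0^j$ and $\tilde\nabla^{k_0^j}\pi_{m',n'}(k)=0$; hence $\tilde\nabla^{m,n}\pi_{m',n'}(k)=0$, which is \eqref{eq:annulconn}. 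The same patching, applied at each $k_0^j$ near a given $k_0$, yields $\pi_{m_L,n_L}\tilde\nabla^{m,n}\pi_{m_L,n_L}=\pi_{m_L,n_L}\nabla^{m,n}\pi_{m_L,n_L}$ on a neighborhood of $k_0$.

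\textbf{Main obstacle.} The delicate part is not the local adiabatic formula but the \emph{global coherence}: one must know that all the sub-projections a single $\tilde\nabla^{m,n}$ is required to kill are mutually compatible — this is precisely where the stratification classification Proposition~\ref{pr:covering}-\textit{(7)} (reflecting that overlapping $\Omega$'s force $\overline{S_{m,n}}\subset\overline{S_{m',n'}}$ or conversely) is essential — and that each local model annihilates exactly the sub-projections relevant on its own support, so that the partition-of-unity combination inherits every constraint without the corrections interfering. The additional agreement with $\nabla^{m,n}$ on $\pi_{m_L,n_L}\mathcal{F}$ then rests on the further, mild but necessary, point that the minimal projection of the chain at $(k_0,\lambda_0)$ is never split by the refinement in a neighborhood of $k_0$, which is what keeps $\nabla^{m,n}$ untouched there.
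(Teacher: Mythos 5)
Your construction is, in substance, the one the paper uses: for each fixed $(m,n)$ you build $\tilde{\nabla}^{m,n}$ by gluing, with a partition of unity on $\omega_{m}$, adiabatic connections $\sum_{\beta}Q_{\beta}\nabla^{m,n}Q_{\beta}$ attached to local orthogonal refinements of $\pi_{m,n}$ generated by the overlapping spectral projections. The paper indexes the local models by the subsets $\alpha\ni m$ (via $\tilde{\omega}_{\alpha}=\omega_{\alpha}\setminus\bigcup_{m'\notin\alpha}\supp g_{m'}$) and obtains the refinement from Lemma~\ref{le:Cstar} applied to all products $\pi_{m,n}\pi_{m',n'}$, $m'\in\alpha$, $0\le n'\le N_{m'}$, whereas you index by points $k_{0}$ and keep only the ``superiors''; this reduction is legitimate, since in the remaining cases of Proposition~\ref{pr:covering}-\textit{(7)} one has $\pi_{m,n}\pi_{m',n'}\in\left\{0,\pi_{m,n}\right\}$, which any connection on $\pi_{m,n}\mathcal{F}$ annihilates. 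The verification of \eqref{eq:annulconn} by locality of the partition of unity is then the same as in the paper.

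The one step that is not actually justified is the parenthetical claim that no active superior splits $\pi_{m_{L},n_{L}}$ near $k_{0}$. Proposition~\ref{pr:covering}-\textit{(7)} only yields a dichotomy: if $\pi_{m',n'}\pi_{m_{L},n_{L}}\neq 0$ then $\pi_{m',n'}\geq \pi_{m_{L},n_{L}}$ \emph{or} $\pi_{m',n'}\leq \pi_{m_{L},n_{L}}$, and the second branch with strict inequality --- a nonzero proper subprojection of $\pi_{m_{L},n_{L}}$, occurring when $J_{m',n'}$ captures at $k_{0}$ part of the eigenvalue group of $J_{m_{L},n_{L}}$ without containing $\lambda_{0}$ --- is exactly the splitting you need to exclude, and \textit{(7)} alone does not exclude it. The correct argument uses the structure of the covering: since $k_{0}\in\omega_{m'}$ and $\lambda_{0}\in \mathrm{Spec}(H_{0}(k_{0}))\cap\overline{I}\subset\bigsqcup_{n''=1}^{N_{m'}}J_{m',n''}$, there is a unique sibling $n''$ with $(k_{0},\lambda_{0})\in\Omega_{m',n''}$; that pair belongs to the chain $\left\{(m_{\ell},n_{\ell})\right\}$, so $\pi_{m_{L},n_{L}}\leq\pi_{m',n''}$ by Proposition~\ref{pr:order}, and then $\pi_{m_{L},n_{L}}\pi_{m',n'}=\pi_{m_{L},n_{L}}\pi_{m',n''}\pi_{m',n'}=0$ for $n'\neq n''$ because the $J_{m',n}$ are pairwise disjoint. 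This is precisely what the paper extracts from the last part of Proposition~\ref{pr:nablaalpha} (the block containing $\lambda_{0}$ equals $\pi_{m,n}\pi_{m_{L_{\gamma}},n_{L_{\gamma}}}$ with $(m_{L_{\gamma}},n_{L_{\gamma}})$ in the chain). With that substitution your proof is complete.
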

The proof will be done in several steps. It can  be done for any fixed pair $(m,n)\in L(I)$\,.  We firstly construct  a unitary connection $\nabla^{\alpha}$ on the restricted vector bundle $p_{\pi_{m,n}\mathcal{F}}: \pi_{m,n}\mathcal{F}\to \omega_{\alpha}$ where $m\in\alpha\subset \left\{1,\ldots,M_{I_{1}}\right\}$ and $\omega_{\alpha}=\ccap_{m'\in \alpha}\omega_{m'}\neq \emptyset$\,. Then we use a partition of unity in order to glue all these connections $\nabla^{\alpha}$\,.
\begin{lemma}
\label{le:Cstar}
  Let $\mathcal{P}_{\mathcal{I}}=\left\{\pi_{i}, i\in \mathcal{I}\right\}$ be a finite non empty set of self-adjoint projectors $\pi_{i}^{2}=\pi_{i}=\pi_{i}^{*}$ in a unital \underline{commutative} $\mathcal{C}^{*}$-algebra $\mathcal{A}$\,.
  For $\gamma\subset \mathcal{I}$ set $\pi_{\gamma}=\prod_{i\in \gamma}\pi_{i}$ with the convention $\pi_{\emptyset}=1_{\mathcal{A}}$\,.  Assume that $\pi\in \mathcal{P}_{\mathcal{I}}$ implies $1-\pi\in \mathcal{P}_{\mathcal{I}}$\,. Then the $\mathcal{C}^{*}$-algebra generated by $\mathcal{P}_{\mathcal{I}}$\,, $\mathcal{C}^{*}(\mathcal{P}_{\mathcal{I}})$\,, is finite dimensional and a basis is given by the family, $\Gamma$\,, made of $\pi_{\gamma}\neq 0$ such that for all $i\in \mathcal{I}$\,, $\pi_{i}\pi_{\gamma}=0$ when $i\not\in \gamma$\,. In particular for any $i\in \mathcal{I}$\,, $\pi_{i}$ admits a unique decomposition $\pi_{i}=\sum_{\gamma\in \Gamma}c_{i,\gamma}\pi_{\gamma}$ with $c_{i,\gamma}\in \left\{0,1\right\}$\,.
\end{lemma}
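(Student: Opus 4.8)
The plan is to reduce the statement to the standard structure theory of a finite Boolean algebra of commuting projections, the only genuinely delicate point being the use of the hypothesis that $\mathcal{P}_{\mathcal{I}}$ is stable under complementation. First I would introduce, for every $J\subseteq\mathcal{I}$, the \emph{atom} $e_{J}=\prod_{i\in J}\pi_{i}\prod_{i\in\mathcal{I}\setminus J}(1-\pi_{i})$, the product being taken in the \emph{commutative} algebra $\mathcal{A}$. Since the $\pi_{i}$ are commuting self-adjoint projections, each $e_{J}$ is again a self-adjoint projection, the $e_{J}$ are pairwise orthogonal (for $J\neq J'$ pick $i\in J\triangle J'$ and use $\pi_{i}(1-\pi_{i})=0$), and expanding $\prod_{i\in\mathcal{I}}\big(\pi_{i}+(1-\pi_{i})\big)=1$ gives $\sum_{J\subseteq\mathcal{I}}e_{J}=1$; thus the non-zero $e_{J}$ form a finite orthogonal resolution of the identity in $\mathcal{A}$.

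The heart of the argument is then to identify $\{\pi_{\gamma}:\gamma\in\Gamma\}$ with the set of non-zero atoms $e_{J}$, and this is where stability under complementation is used. Choose, for each $i$, an index $\sigma(i)\in\mathcal{I}$ with $1-\pi_{i}=\pi_{\sigma(i)}$. Given $e_{J}\neq0$, set $\gamma=J\cup\sigma(\mathcal{I}\setminus J)$; collapsing repeated idempotent factors one gets $\pi_{\gamma}=e_{J}\neq0$, and for any $i\notin\gamma$ one has $i\notin J$, so the factor $1-\pi_{i}$ occurs in $e_{J}$ and hence $\pi_{i}\pi_{\gamma}=0$, which shows $\gamma\in\Gamma$. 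Conversely, if $\gamma\in\Gamma$ then $\pi_{i}\pi_{\gamma}=\pi_{\gamma}$ for $i\in\gamma$ while $(1-\pi_{i})\pi_{\gamma}=\pi_{\gamma}$ for $i\notin\gamma$, and multiplying all these relations yields $\pi_{\gamma}=e_{\gamma}$. Distinct elements of $\Gamma$ give orthogonal, hence distinct, non-zero projections, so $\gamma\mapsto\pi_{\gamma}$ is a bijection from $\Gamma$ onto the non-zero atoms; in particular $\{\pi_{\gamma}:\gamma\in\Gamma\}$ is a linearly independent family of self-adjoint projections with $\pi_{\gamma}\pi_{\gamma'}=\delta_{\gamma\gamma'}\pi_{\gamma}$ and $\sum_{\gamma\in\Gamma}\pi_{\gamma}=1$.

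The conclusion is then immediate. The linear span $E$ of $\{\pi_{\gamma}:\gamma\in\Gamma\}$ is a finite-dimensional unital $*$-subalgebra of $\mathcal{A}$, hence norm-closed, and it contains each $\pi_{i}$ because $\pi_{i}=\pi_{i}\sum_{\gamma\in\Gamma}\pi_{\gamma}=\sum_{\gamma\ni i}\pi_{\gamma}$ (using $\pi_{i}e_{\gamma}=e_{\gamma}$ for $i\in\gamma$ and $\pi_{i}e_{\gamma}=0$ for $i\notin\gamma$); hence $\mathcal{C}^{*}(\mathcal{P}_{\mathcal{I}})\subseteq E$, while the reverse inclusion is clear since every $\pi_{\gamma}$ is a product of elements of $\mathcal{P}_{\mathcal{I}}$. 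Thus $\mathcal{C}^{*}(\mathcal{P}_{\mathcal{I}})=E$ is finite-dimensional with basis $\{\pi_{\gamma}:\gamma\in\Gamma\}$, the displayed decomposition $\pi_{i}=\sum_{\gamma\in\Gamma}c_{i,\gamma}\pi_{\gamma}$ with $c_{i,\gamma}=1$ for $i\in\gamma$ and $c_{i,\gamma}=0$ otherwise has just been exhibited, and its uniqueness follows from the linear independence of the $\pi_{\gamma}$. I expect the middle paragraph to be the main obstacle: without the closure hypothesis the pure products $\pi_{\gamma}$ of the $\pi_{i}$ need not span the algebra (e.g.\ $1-\pi_{1}$ need not be such a combination), and the maximality clause defining $\Gamma$ (``$\pi_{i}\pi_{\gamma}=0$ whenever $i\notin\gamma$'') is precisely what is needed to match the surviving products bijectively with the atoms $e_{J}$ without repetition. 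Alternatively one could invoke Gelfand duality, writing $\mathcal{A}\cong C(X)$ with the $\pi_{i}$ indicator functions of clopen sets stable under complement and reading off $\Gamma$ as the atoms of the finite Boolean algebra they generate; the algebraic route above has the advantage of being self-contained.
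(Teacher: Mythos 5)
Your proof is correct and follows essentially the same route as the paper's: both rest on resolving the identity into the minimal products (``atoms'') produced by inserting $\pi_{i}+(1-\pi_{i})=1$, with the complement-closure hypothesis guaranteeing that each non-zero atom is itself some $\pi_{\gamma}$ with $\gamma\in\Gamma$ --- the paper merely performs this splitting iteratively on each $\pi_{\gamma}$ via $\pi_{\gamma}=\pi_{i}\pi_{\gamma}+\pi_{i'}\pi_{\gamma}$ instead of expanding $\prod_{i}(\pi_{i}+(1-\pi_{i}))=1$ at the outset. Your middle paragraph, matching $\Gamma$ bijectively with the non-zero atoms $e_{J}$, makes explicit a point the paper leaves implicit, but it is the same argument.
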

\begin{proof}
  The commutative $\mathcal{C}^{*}$-algebra generated by $\mathcal{P}_{\mathcal{I}}$ is the set of linear combinations
 $$
\sum_{\gamma\subset \mathcal{I}}c_{\gamma}\pi_{\gamma}\,,\quad c_{\gamma}\in \cz\,,
$$
where all the $\pi_{\gamma}$'s are self-adjoint projectors.\\
For any $\gamma\subset \mathcal{I}$ and any $i\in \mathcal{I}$ we can write
$$
\pi_{\gamma}=\pi_{i}\pi_{\gamma}+(1_{\mathcal{A}}-\pi_{i})\pi_{\gamma}=\pi_{i}\pi_{\gamma}+\pi_{i'}\pi_{\gamma}
$$
with $\pi_{i'}=1_{\mathcal{A}}-\pi_{i}\in \mathcal{P}_{\mathcal{I}}$\,. We deduce that any $\pi_{\gamma}$\,, $\gamma\subset \mathcal{I}$\,, can be written as a finite sum of $\pi_{\gamma'}$\,, $\gamma'\subset \mathcal{I}$\,, such that $\pi_{i}\pi_{\gamma'}=0$ for all $i\in \mathcal{I}\setminus \gamma'$\,.\\
Our family $\Gamma$\,, is thus a linear generating family of $\mathcal{C}^{*}(\mathcal{P}_{\mathcal{I}})$\,. It is linearly independent because
$\sum_{\gamma\in \mathcal{F}}c_{\gamma}\pi_{\gamma}=0$ implies  $c_{\gamma_{1}}\pi_{\gamma_{1}}=\pi_{\gamma_{1}}\left[\sum_{\gamma\in \mathcal{F}}c_{\gamma}\pi_{\gamma}\right]=0$ and therefore $c_{\gamma_{1}}=0$ for all $\gamma_{1}\in \Gamma$\,.
\end{proof}
We now construct the connection $\nabla^{\alpha}$ on $p_{\pi_{m,n}\mathcal{F}}: \pi_{m,n}\mathcal{F}\to \omega_{\alpha}$ when $\omega_{\alpha}=\bigcap_{m'\in \alpha}\omega_{m'}\neq \emptyset$ for $m\in \alpha{\red\subset} \left\{1,\ldots, M_{I}\right\}$\,.
\begin{proposition}
  \label{pr:nablaalpha}
  Fix $\alpha\subset\left\{1,\ldots,M_{I}\right\}$ and $(m,n)\in L(I)$\,.
  Assume $m\in \alpha\subset\left\{1,\ldots,M_{I}\right\}$ with $\omega_{\alpha}=\bigcap_{m'\in \alpha}\omega_{m'}\neq \emptyset$\,. There exists a linearly independent finite family $(\pi_{\gamma})_{\gamma\in \Gamma_{\alpha}}$ of spectral projections $\pi_{\gamma}(k)=1_{J_{\gamma}}(H_{0}(k))$\,, $J_{\gamma}\subset J_{m,n}$\,, which are analytic with respect to $k\in \omega_{\alpha}$\,, and such that
  \begin{itemize}
  \item $\pi_{\gamma}\pi_{\gamma'}=0$ if $\gamma\neq \gamma'$\,, $\gamma,\gamma'\in \Gamma_{\alpha}$\,,
  \item for all {\red pairs} $(m',n')$\,, $m'\in \alpha$\,, $0\leq n'\leq N_{m'}$\,, the spectral projection $\pi_{m,n}\pi_{m',n'}$ admits a unique decomposition
$$
\pi_{m,n}\pi_{m',n'}=\sum_{\gamma\in \Gamma_{\alpha}}c_{m',n',\gamma}\pi_{\gamma}\quad,\quad c_{m',n',\gamma}\in \left\{0,1\right\}\,.
$$
\end{itemize}
The unitary connection $\nabla^{\alpha}=\sum_{\gamma\in \Gamma_{\alpha}}\pi_{\gamma}\circ \nabla^{m,n}\circ \pi_{\gamma}$ on $p_{\pi_{m,n}\mathcal{F}}:\pi_{m,n}\mathcal{F}\to \omega_{\alpha}$ then satisfies
$$
\forall m'\in \alpha\,, \forall n'\in \left\{0,\ldots, N_{k_{m'}}\right\}\,,\quad \nabla^{\alpha}(\pi_{m,n}\pi_{m',n'})=0\quad\text{on}~\omega_{\alpha}\,.
$$
If $J_{\gamma}\cap \overline{I}\neq \emptyset$ for $\gamma\in \Gamma_{\alpha}$\,, then $\left\{(m',n')\,, m'\in \alpha, 0\leq n'\leq N_{m'}, J_{\gamma}\subset J_{m',n'} \right\}\subset L(I)$ can be ordered like in Proposition~\ref{pr:order} and written $\left\{(m_{\ell},n_{\ell})\,, 1\leq \ell\leq L_{\gamma}\right\}$ with
$$
J_{\gamma}=\ccap_{
    \ell=1}^{L_{\gamma}}(J_{m,n}\cap J_{m_{\ell},n_{\ell}})=J_{m,n}\cap J_{m_{L_{\gamma}},n_{L_{\gamma}}} \quad,\quad \pi_{\gamma}=\pi_{m,n}\pi_{m_{L_{\gamma}},n_{_{L_{\gamma}}}}\,.
$$
\end{proposition}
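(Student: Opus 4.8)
The plan is to take $\Gamma_\alpha$ to be the family of \emph{atoms} of the commutative $\mathcal{C}^{*}$-algebra generated by the spectral projections living over $\omega_\alpha$, in the spirit of Lemma~\ref{le:Cstar}, and then to upgrade these atoms to analytic projections. For each $m'\in\alpha$ the intervals $J_{m',0},J_{m',1},\dots,J_{m',N_{m'}}$ form a partition of $\rz$ (Proposition~\ref{pr:covering}-\textit{(2)} together with $J_{m',0}=\rz\setminus\bigcup_{n'\geq1}J_{m',n'}$), so a set of the form $J_{\gamma}=\bigcap_{m'\in\alpha}J_{m',\nu(m')}$, indexed by a choice function $\nu$, is a typical atom of the Boolean algebra generated by $\{J_{m',n'}:m'\in\alpha,\,0\leq n'\leq N_{m'}\}$. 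I would let $\Gamma_\alpha$ be the set of such $J_{\gamma}$ with $\nu(m)=n$ (so that $J_{\gamma}\subset J_{m,n}$) and $\pi_{\gamma}:=1_{J_{\gamma}}(H_{0}(\cdot))\not\equiv0$ on $\omega_\alpha$. Writing $\pi_{\gamma}=\prod_{m'\in\alpha}1_{J_{m',\nu(m')}}(H_{0}(k))$ and using $1_{J_{m',0}}(H_{0}(k))=\mathrm{Id}_{\mathcal{F}_{k}}-\sum_{n'\geq1}\pi_{m',n'}(k)$ exhibits $\pi_{\gamma}$ as a polynomial in the analytic projections $\pi_{m',n'}$, hence $\pi_{\gamma}$ is an analytic section of $\mathcal{L}(\pi_{m,n}\mathcal{F})$ over $\omega_\alpha$; since $\omega_\alpha$ is geodesically convex (hence connected) as in the proof of Proposition~\ref{pr:covering}, $\mathrm{rank}\,\pi_{\gamma}(k)$ is constant, so $\pi_{\gamma}$ is either identically $0$ or nowhere $0$ on $\omega_\alpha$. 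Distinct atoms are disjoint as sets, whence $\pi_{\gamma}\pi_{\gamma'}=1_{J_{\gamma}\cap J_{\gamma'}}(H_{0}(k))=0$ for $\gamma\neq\gamma'$; orthogonality and $\pi_{\gamma}\neq0$ give the announced linear independence, and $\sum_{\gamma\in\Gamma_\alpha}\pi_{\gamma}=1_{J_{m,n}}(H_{0}(k))=\pi_{m,n}$. Finally, for $m'\in\alpha$ and $0\leq n'\leq N_{m'}$ the set $J_{m,n}\cap J_{m',n'}$ is the union of the atoms it contains, so $\pi_{m,n}\pi_{m',n'}=\sum_{\gamma\in\Gamma_\alpha}c_{m',n',\gamma}\pi_{\gamma}$ with $c_{m',n',\gamma}\in\{0,1\}$, the coefficients being unique by linear independence.

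With these projections at hand, $\nabla^{\alpha}=\sum_{\gamma\in\Gamma_\alpha}\pi_{\gamma}\circ\nabla^{m,n}\circ\pi_{\gamma}$ is precisely the adiabatic unitary connection associated with the orthogonal decomposition $\pi_{m,n}\mathcal{F}=\bigoplus_{\gamma\in\Gamma_\alpha}\pi_{\gamma}\mathcal{F}$ over $\omega_\alpha$ discussed in Subsection~\ref{sec:connections}; in particular it is unitary and is characterised by $\nabla^{\alpha}\pi_{\gamma}=0$ for every $\gamma\in\Gamma_\alpha$. Differentiating $\pi_{m,n}\pi_{m',n'}=\sum_{\gamma}c_{m',n',\gamma}\pi_{\gamma}$ (constant coefficients) then gives $\nabla^{\alpha}(\pi_{m,n}\pi_{m',n'})=0$ on $\omega_\alpha$ for all $m'\in\alpha$, $0\leq n'\leq N_{m'}$, which is the asserted vanishing.

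It remains to identify the atoms that meet the spectrum over $\overline{I}$. Fix $\gamma\in\Gamma_\alpha$ with $\pi_{\gamma}(k_{0})1_{\overline{I}}(H_{0}(k_{0}))\neq0$ for some $k_{0}\in\omega_\alpha$, and pick $\lambda_{0}\in J_{\gamma}\cap\overline{I}\cap\mathrm{Spec}(H_{0}(k_{0}))$ (this is the content of ``$J_{\gamma}\cap\overline{I}\neq\emptyset$'' once $J_\gamma$ is replaced by a spectrally equivalent representative). Since $\mathrm{Spec}(H_{0}(k))\cap\overline{I}\subset\bigsqcup_{n'\geq1}J_{m',n'}$ for $k\in\omega_{m'}$, the choice function $\nu$ defining $J_{\gamma}$ satisfies $\nu(m')\geq1$ for all $m'\in\alpha$, so $J_{\gamma}=\bigcap_{m'\in\alpha}J_{m',\nu(m')}$ is a genuine interval and $\{(m',n'):m'\in\alpha,\,0\leq n'\leq N_{m'},\,J_{\gamma}\subset J_{m',n'}\}=\{(m',\nu(m')):m'\in\alpha\}\subset L(I)$, with $\pi_{\gamma}=\prod_{m'\in\alpha}\pi_{m',\nu(m')}$. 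All these pairs satisfy $(k_{0},\lambda_{0})\in\Omega_{m',\nu(m')}$, hence $\bigcap_{m'\in\alpha}\Omega_{m',\nu(m')}\cap\Sigma\neq\emptyset$ and Proposition~\ref{pr:order} applies: it orders them as $(m_{1},n_{1}),\dots,(m_{L_{\gamma}},n_{L_{\gamma}})$ with $\overline{S_{m_{\ell},n_{\ell}}}\subset\overline{S_{m_{\ell+1},n_{\ell+1}}}$ and yields $\pi_{m_{\ell},n_{\ell}}\pi_{m_{L_{\gamma}},n_{L_{\gamma}}}=\pi_{m_{L_{\gamma}},n_{L_{\gamma}}}$ on $\bigcap_{\ell}\omega_{m_{\ell}}=\omega_\alpha$ for every $\ell$. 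Therefore $\pi_{m_{L_{\gamma}},n_{L_{\gamma}}}\leq\prod_{\ell}\pi_{m_{\ell},n_{\ell}}=\pi_{\gamma}$ while also $\pi_{\gamma}\leq\pi_{m_{L_{\gamma}},n_{L_{\gamma}}}$ (the latter being one of the factors), so $\pi_{\gamma}=\pi_{m_{L_{\gamma}},n_{L_{\gamma}}}=\pi_{m,n}\pi_{m_{L_{\gamma}},n_{L_{\gamma}}}$ (using that $(m,n)$ is among the pairs, hence $\pi_{m_{L_{\gamma}},n_{L_{\gamma}}}\leq\pi_{m,n}$); reading this spectrally gives $J_{\gamma}=\bigcap_{\ell=1}^{L_{\gamma}}(J_{m,n}\cap J_{m_{\ell},n_{\ell}})=J_{m,n}\cap J_{m_{L_{\gamma}},n_{L_{\gamma}}}$.

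The bulk of the work is bookkeeping: the intervals matter only through their spectral projections, so Proposition~\ref{pr:covering}-\textit{(4)} (the spectrum of $H_{0}(k)$ stays away from the endpoints of the $J_{m',n'}$) is what keeps the $\pi_{\gamma}$ analytic, and geodesic convexity of $\omega_\alpha$ is what makes their ranks constant. The genuinely structural input is the nesting furnished by Proposition~\ref{pr:order}: it is exactly this that collapses the ``minimal'' atom over $\overline{I}$ to the single spectral projection $\pi_{m_{L_{\gamma}},n_{L_{\gamma}}}$, and I expect this last identification to be the main point to get right.
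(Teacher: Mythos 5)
Your proposal is correct and takes essentially the same approach as the paper: the paper forms the atoms of the commutative $\mathcal{C}^{*}$-algebra generated by the projections $\{\pi_{m,n}\pi_{m',n'}\}$ over $\omega_\alpha$ via Lemma~\ref{le:Cstar}, you form them directly as intersections $\bigcap_{m'\in\alpha}J_{m',\nu(m')}$ indexed by choice functions $\nu$, and the two indexings yield the same family of atomic spectral projections. The connectedness/constant-rank remark and the final use of Proposition~\ref{pr:order} to identify the minimal atom over $\overline{I}$ with $\pi_{m,n}\pi_{m_{L_\gamma},n_{L_\gamma}}$ match the paper's argument.
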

\begin{proof}
  Let us work in the unital commutative $\mathcal{C}^{*}$-algebra, $\mathcal{A}$\,, of continuous functions of $f(k,H_{0}(k))$\,, $k\in \omega_{\alpha}$\,, such that $\pi_{m,n}(k)f(k,H_{0}(k))=f(k,H_{0}(k))$ with the unit $\pi_{m,n}$\,.\\
 For any $m'\in \alpha$\,, we know the fiberwise identity $\oplus_{n'=0}^{N_{k_{m'}}}\pi_{m',n'}(k)=\mathrm{Id}_{\mathcal{F}_{k}}$ for $k\in \omega_{\alpha}\subset \omega_{m'}$ and
  $$
  \sum_{n'=0}^{N_{k_{m'}}}\pi_{m,n}\pi_{m',n'}=\pi_{m,n}=1_{\mathcal{A}}\,.
  $$
  By {\red setting} $\mathcal{I}_{0}=\left\{(m',n')\,, m'\in \alpha, 0\leq n'\leq N_{k_{m'}}\right\}$ and $\mathcal{P}_{\mathcal{I}_{0}}=\left\{\pi_{m,n}\pi_{m',n'}\,, (m',n')\in \mathcal{I}_{0}\right\}$ and $\mathcal{P}_{\mathcal{I}}=\mathcal{P}_{\mathcal{I}_{0}}\bigcup \left\{1_{\mathcal{A}}-\pi_{m,n}\pi_{m',n'}\,, (m',n')\in \mathcal{I}_{0}\right\}$\,, $\mathcal{I}=\mathcal{I}_{0}\times \left\{0,1\right\}$\,, we deduce
$$
\mathcal{C}^{*}(\mathcal{P}_{\mathcal{I}_{0}})=\mathcal{C}^{*}(\mathcal{P}_{\mathcal{I}})\,.
$$
The family $\mathcal{P}_{\mathcal{I}}$ is stable by $\pi\to 1_{\mathcal{A}}-\pi$ and Lemma~\ref{le:Cstar} can be applied.\\
By construction the basis $(\pi_{\gamma})_{\gamma\in \Gamma_{\alpha}}$ of $\mathcal{C}^{*}(\mathcal{P}_{\mathcal{I}_{0}})=\mathcal{C}^{*}(\mathcal{P}_{\mathcal{I}})$\,, is made of products
\begin{equation}
  \label{eq:formpigamma}
\prod_{(m',n')\in \gamma_{0}}(\pi_{m,n}\pi_{m',n'})\circ \prod_{(m',n')\in \gamma_{1}}(\pi_{m,n}-\pi_{m,n}\pi_{m',n'})
\end{equation}
for $\gamma_{0},\gamma_{1}\subset \mathcal{I}_{0}$\,. They can be written $1_{J_{\gamma}}(H_{0}(k))$ with $J_{\gamma}\subset J_{m,n}$ and  they are all analytic with respect to $k\in \omega_{\alpha}$\,.\\
The property of the unitary connection $\nabla^{\alpha}$ is due to $\pi_{\gamma}(\pi_{m,n}\pi_{m',n'})\in \left\{\pi_{\gamma},0\right\}$ for all $\gamma\in \Gamma_{\alpha}$\,, $m'\in \alpha$ and all $n'\in \left\{0,\ldots, N_{k_{m'}}\right\}$\,.\\
When $J_{\gamma}\cap \overline{I}\neq \emptyset$ with $\pi_{\gamma}=1_{J_{\gamma}}(H_{0}(k))$\,,  the pairs $(m',n')$ involved in the first factor of
\eqref{eq:formpigamma} must satisfy $n'\geq 1$ while in the second factor the pairs must be $(m',0)$\,. With $\pi_{m,n}-\pi_{m,n}\pi_{m',0}=\pi_{m,n}\sum_{n'=1}^{N_{m'}}\pi_{m',n'}$\,,  this implies that $\pi_{\gamma}$ is a positive linear combination of products of $\pi_{m',n'}$\,, $m'\in \alpha$\,, $(m',n')\in L(I)$\,. But the two properties of the basis $(\pi_{\gamma})_{\gamma\in \Gamma_{\alpha}}$ says that it must be the product of the all the $\pi_{m_{\ell},n_{\ell}}$\,, $1\leq \ell\leq L_{\gamma}$ with $m_{\ell}\in \alpha$\,, $(m_{\ell},n_{\ell})\in L(I)$ and $J_{\gamma}\subset J_{m_{\ell},n_{\ell}}$\,. For such pairs $(m',n')$ we have
$$
\forall k\in \omega_{\alpha}\,,\quad 0\neq \pi_{\gamma}(k)\leq \pi_{m',n'}(k)\,,
$$
and the intersection $\bigcap_{\ell=1}^{L_{\gamma}}\Omega_{m_{\ell}}\cap \Sigma\supset (\omega_{\alpha}\times J_{\gamma})\cap \Sigma\neq \emptyset$\,. By ordering the set $\left\{(m_{\ell},n_{\ell})\,, 1\leq \ell\leq L_{\gamma}\right\}$ like in Proposition~\ref{pr:order} we get $J_{\gamma}=J_{m_{L_{\lambda}},n_{L_{\gamma}}}$ and $\pi_{\gamma}(k)=\pi_{m_{L_{\gamma}},n_{L_{\gamma}}}(k)$ for $k\in \omega_{\alpha}$\,.
\end{proof}
\begin{proof}[Proof of Proposition~\ref{pr:AI1I}]
Fix $m\in \left\{1,\ldots,M_{I}\right\}$\,.  The set $K=\bigcup_{m'\neq m}\supp (g_{m}\cap \supp g_{m'})$ is a  compact subset of $\omega_{m}$\,. For any $\alpha\subset \left\{1,\ldots, M_{I}\right\}$ such that $m\in \alpha$\,, $\sharp \alpha\geq 2$\,, set
 $$
 \tilde{\omega}_{\alpha}=\omega_{\alpha}\setminus \ccup_{m'\not\in \alpha}\supp g_{m'}
 =\ccap_{m'\in \alpha}\omega_{m'}\setminus (\ccup_{m'\not \in \alpha}\supp g_{m'})\,.
 $$
 These are open sets and they make an open covering of $K$\,,
$$
K\subset \ccup_{\tiny
  \begin{array}[c]{c}
    \alpha\subset \left\{1,\ldots, M_{I}\right\}\\
    m\in \alpha\\
    \sharp \alpha\geq 2
  \end{array}
}\tilde{\omega}_{\alpha}\,.
$$
Therefore there exists a partition of unity
$$
\sum_{m\in \alpha\subset \left\{1,\ldots, M_{I}\right\}}\Theta_{\alpha}^{2}\equiv 1\quad \text{on}~\omega_{m}
$$
with
\begin{itemize}
\item $\Theta_{\alpha}\in \mathcal{C}^{\infty}_{comp}(\tilde{\omega}_{\alpha})$ when $\sharp \alpha\geq 2$\,,
\item $\Theta_{\left\{m\right\}}\in \mathcal{C}^{\infty}(\omega_{m})$ and $\supp \Theta_{\left\{m\right\}}\subset \omega_{m}\setminus K$\,.
\end{itemize}
The unitary connection $\tilde{\nabla}$ on $p_{\pi_{m,n}\mathcal{F}}:\pi_{m,n}\mathcal{F}\to \omega_{m}$ is defined as
$$
\tilde{\nabla}^{m,n}=\sum_{m\in \alpha \subset\left\{1,\ldots,M_{I}\right\}}
\Theta_{\alpha}\nabla^{\alpha}\Theta_{\alpha}
$$
where $\nabla^{\alpha}$ is the connection above $\omega_{\alpha}\supset\tilde{\omega}_{\alpha}$ introduced in Proposition~\ref{pr:nablaalpha}.
For any $m'\in \left\{1,\ldots, M_{I}\right\}$ such that $\supp g_{m}\cap \supp g_{m'}\neq \emptyset$ we get
\begin{eqnarray*}
  &&
\tilde{\nabla}^{m,n}=\sum_{\left\{m,m'\right\}\subset \alpha \subset \left\{1,\ldots M_{I}\right\}}\Theta_{\alpha}\nabla^{\alpha}\Theta_{\alpha}
  \\
\text{with}&& \sum_{\left\{m,m'\right\}\subset \alpha \subset \left\{1,\ldots M_{I}\right\}}\Theta_{\alpha}^{2}\equiv 1
\end{eqnarray*}
on a neighborhood of $\mathcal{V}_{m,m'}$ of $\supp g_{m}\cap \supp g_{m'}$\,.  This implies for any $n'\in \left\{1,\ldots,N_{m'}\right\}$\,,
$$
\tilde{\nabla}^{m,n}(\pi_{m,n}\pi_{m',n'})=\sum_{\left\{m,m'\right\}\subset \alpha \subset \left\{1,\ldots M_{I}\right\}}\Theta_{\alpha}^{2}\nabla^{\alpha}(\pi_{m,n}\pi_{m',n'}\big|_{\omega_{\alpha}})\quad\text{on}~\mathcal{V}_{m,m'}\,,
$$
where Proposition~\ref{pr:nablaalpha} says $\nabla^{\alpha}(\pi_{m}\pi_{m'}\big|_{\omega_{\alpha}})=0$ when $m'\in \alpha$\,.\\
Therefore  \eqref{eq:annulconn} holds true for all pairs $(m,n)$ and $(m',n')$ in $L(I)$\,.\\
For the second property we again fix $m\in \left\{1,\ldots, M_{I}\right\}$\,, 
$(k_{0},\lambda_{0})\in \Omega_{m,n}\cap p_{\Sigma\to\rz}^{-1}(\overline{I})\subset \Sigma$\,, and consider the set
$$
\left\{(m',n')\in L(I)\,,\, (k_{0},\lambda_{0})\in \Omega_{m',n'}\right\}=\left\{(m_{\ell},n_{\ell})\,,\, 1\leq \ell\leq L\right\}\,.
$$
The intersection $\bigcap_{\ell=1}^{L}\Omega_{m_{\ell},n_{\ell}}\cap \Sigma\ni (k_{0},\lambda_{0})$ and the family $((m_{\ell},n_{\ell}))_{1\leq \ell\leq L}$ can be ordered according to Proposition~\ref{pr:order}.\\
Define
$$
\alpha_{0}=\left\{m_{\ell}\,,\, 1\leq \ell\leq L\right\}
$$
where obviously $m\in \alpha_{0}$ and $k_{0}\in \omega_{\alpha_{0}}$\,.
 A stronger property says that $k_{0}\in \omega_{\alpha}$ for $m\in \alpha\subset\left\{1,\ldots,M_{I}\right\}$\,, implies $\alpha\subset \alpha_{0}$\,: Actually if $k_{0}\in \omega_{m'}$\,, then the condition $(k_{0},\lambda_{0})\in \Sigma\cap (\omega_{m'}\times \overline{I})$ implies the existence of  $(m',n')\in L(I)$ such that $\lambda_{0}\in J_{m',n'}$\,. Thus $k_{0}\in \omega_{m'}$ implies $m'\in \alpha_{0}$ and therefore $k_{0}\not \in \omega_{\alpha}$ for $\alpha\not\subset \alpha_{0}$\,. Because $\supp \Theta_{\alpha}\Subset \omega_{\alpha}$ for $m\in \alpha\not\subset \alpha_{0}$\,, $\mathcal{V}_{k_{0}}=\omega_{\alpha_{0}}\setminus \ccup_{m\in\alpha\not\subset \alpha_{0}}\supp \Theta_{\alpha}$ is an open neighborhood of $k_{0}$\,.
We now write
\begin{eqnarray*}
  && \tilde{\nabla}^{m,n}=\sum_{m\in \alpha\subset \alpha_{0}}\Theta_{\alpha}\nabla^{\alpha}\Theta_{\alpha}\quad\text{on}~\mathcal{V}_{k_{0}}\,,\\
  \text{and}&& \sum_{m\in \alpha\subset \alpha_{0}}\Theta_{\alpha}^{2}\equiv 1\quad \text{on}~\mathcal{V}_{k_{0}}\,.
\end{eqnarray*}
With the notations of Proposition~\ref{pr:nablaalpha}, for  $m\in\alpha\subset \alpha_{0}$ there exists $\gamma\in \Gamma_{\alpha}$  such that $J_{\gamma}$ contains $\lambda_{0}\in \text{Spec}(H_{0}(k_{0}))\cap \overline{I}$\,. For such a $\gamma$\,, Proposition~\ref{pr:nablaalpha} provides a pair $(m_{\alpha,\gamma},n_{\alpha,\gamma})\in L(I)$\,, $m_{\alpha,\gamma}\in \alpha$ such that $\pi_{\gamma}=\pi_{m,n}\pi_{m_{\alpha,\gamma},n_{\alpha,\gamma}}$\,, $(k_{0},\lambda_{0})\in \Omega_{m_{\alpha,\gamma},n_{\alpha,\gamma}}$\,, and $\pi_{m_{\alpha,\gamma},n_{\alpha,\gamma}}\circ \nabla^{\alpha}\circ \pi_{m_{\alpha,\gamma},n_{\alpha,\gamma}}=\pi_{m_{\alpha,\gamma},n_{\alpha_{\gamma}}}\circ\nabla^{m,n}\circ\pi_{m_{\alpha,\gamma},n_{\alpha,\gamma}}$
on $\omega_{\alpha}$ and therefore on $\mathcal{V}_{k_{0}}\subset \omega_{\alpha_{0}}\subset \omega_{\alpha}$\,. Because $(k_{0},\lambda_{0})\in \Omega_{m_{\alpha,\gamma},n_{\alpha,\gamma}}$\,, there exist $\ell\in \left\{1,\ldots L\right\}$ such that $(m_{\alpha,\gamma},n_{\alpha,\gamma})=(m_{\ell},n_{\ell})$ and the ordering of Proposition~\ref{pr:order} says $\pi_{m_{\alpha,\gamma},n_{\alpha,\gamma}}\pi_{m_{L},n_{L}}=\pi_{m_{\ell},n_{\ell}}\pi_{m_{L},n_{L}}=\pi_{m_{L},n_{L}}$ and
\begin{eqnarray*}
  \pi_{m_{L},n_{L}}\circ \tilde{\nabla}^{m,n}\circ \pi_{m_{L},n_{L}}\big|_{\mathcal{V}_{k_{0}}}
  &=& \sum_{m\in \alpha\subset \alpha_{0}}\Theta_{\alpha}\pi_{m_{L},n_{L}}\pi_{m_{\alpha,\gamma},n_{\alpha,\gamma}}\nabla^{\alpha}\pi_{m_{\alpha,\gamma},n_{\alpha,\gamma}}\pi_{m_{L},n_{L}}\Theta_{\alpha}\big|_{\mathcal{V}_{k_{0}}}\\
  &=& \pi_{m_{L},n_{L}}\circ \left[\sum_{m\in \alpha\subset \alpha_{0}}\Theta_{\alpha}\nabla^{m,n}\Theta_{\alpha}\right]\circ \pi_{m_{L},n_{L}}\big|_{\mathcal{V}_{k_{0}}}\\
  &=& \pi_{m_{L},n_{L}}\circ\nabla^{m,n}\circ\pi_{m_{L},n_{L}}\big|_{\mathcal{V}_{k_{0}}}\,.
\end{eqnarray*}
\end{proof}

\section{The operator $\tilde{A}_{I}$}
\label{sec:opAII}

We specfify here the construction and properties of $\tilde{A}_{I}$ and finish the proof of Theorem~\ref{th:AI1I}.
As mentionned in the beginning of Section~\ref{sec:modconn} the operator $\tilde{A}_{I}$ is given as a first-order differential operator with $\mathcal{C}^{\infty}_{comp}(M;\mathcal{L}(\mathcal{F}))$-coefficients by
$$
\tilde{A}_{I}=i\sum_{m=1}^{M_{I}}\sum_{n=1}^{N_{m}} g_{m}(k)\bigg(\tilde{\nabla}^{m,n}_{X_{m,n}}+\frac{(\mathrm{div}\,X_{m,n})}{2}\pi_{m,n}\bigg)g_{m}(k)
$$
where
\begin{itemize}
\item 
  $\sum_{m=1}^{M_{I}}g_{m}^{2}(k)\equiv 1$  in a neighborhood of $p_{M}(p_{\Sigma\to \rz}^{-1}(\overline{I}))$\,, is the partition of unity subordinate to $p_{M}(\Sigma\cap p_{\Sigma\to\rz}^{-1}(\overline{I}_{1}))\subset \bigcup_{m=1}^{M_{I}}\omega_{m}$\,, introduced in \eqref{eq:partunitM}\eqref{eq:partunitSig}\eqref{eq:chi1J} after Proposition~\ref{pr:covering};
\item $X_{m,n}$ is a vector field such that \eqref{eq:pptyXkmn} holds, of which the local construction was given in Proposition~\ref{pr:covering}-\textit{(6)};
\item $\tilde{\nabla}^{m,n}$ is the modified unitary connection given by Proposition~\ref{pr:AI1I}.
\end{itemize}
By construction $\bigcup_{(m,n)\in L(I)}^{M_{I}}J_{m,n}\Subset \tilde{I}$ and there exists $\chi\in \mathcal{C}^{\infty}_{comp}(\tilde{I};\rz)$ such that
\begin{equation}
  \label{eq:AchiH0}
\tilde{A}_{I}\chi(H_{0})=\tilde{A}_{I}=\chi(H_{0})\tilde{A}_{I} \quad \text{on}~\mathcal{C}^{\infty}_{comp}(M;\mathcal{F})\,.
\end{equation}
As a differential operator we get
$$
[H_{0},iA]=\sum_{(m,n)\in L(I)}g_{m}(k)\pi_{m,n}(k) (\tilde{\nabla}^{m,n}_{X_{m,n}}H_{0}(k))\pi_{m,n}(k)g_{m}(k)=\sum_{(m',n')\in L(I)}B_{m',n'}(k)
$$
with $B_{m',n'}\in \mathcal{C}^{\infty}_{comp}(\omega_{m'};\mathcal{L}(\pi_{m,n}\mathcal{F}))$ with $\supp B_{m',n'}\subset \supp g_{m'}$\,.
But because the connection $\tilde{\nabla}^{m,n}$ satisfies \eqref{eq:annulconn} we deduce that for all $j\in\nz$\,,
$$
\ad_{i\tilde{A}_{I}}^{j}[H_{0},i\tilde{A}_{I}]\in \mathcal{L}(L^{2}(M;\mathcal{F}))\,.
$$
If $\tilde{A}_{I}$ is essentially self-adjoint on $\mathcal{C}^{\infty}_{comp}(M;\mathcal{F})$ and that its self-adjoint extension, still denoted by $\tilde{A}_{I}$\,, satisfies $e^{it\tilde{A}_{I}}D(H_{0})\subset D(H_{0})$\,, for all $t\in\rz$\,, then $H_{0}\in \mathcal{C}^{\infty}(i\tilde{A}_{I})$\,.\\
The second statement is a consequence of \eqref{eq:AchiH0} and it suffices to check the essential self-adjointness of $\tilde{A}_{I}$\,.
Because $p_{M}(\Sigma\cap p_{\Sigma\to \rz}^{-1}\supp \chi)$ is compact with $\chi$ chosen like in \eqref{eq:AchiH0}\,, there exists a smooth domain $\omega$ with $\overline{\omega}$ compact in $M$ such that the coefficients of $\tilde{A}_{I}$ belong to $\mathcal{C}^{\infty}_{comp}(\omega;\mathcal{L}(\mathcal{F}))$\,. Take a finite covering $\overline{\omega}\subset \bigcup_{n=1}^{n_{max}}\omega_{n}$ of coordinate open charts $\omega_{n}$\,. When $\sum_{n=1}^{n_{max}}\theta_{n}^{2}(k)\equiv 1$ in a neighborhood of $\overline{\omega}$ with $\theta_{n}\in \mathcal{C}^{\infty}_{comp}(\omega_{n};\rz)$ consider the operator
\begin{eqnarray*}
  && -\Delta_{\omega}= -\sum_{n=1}^{n_{max}}\theta_{n}\Delta_{\omega_{n}}\theta_{n}\\
  \text{with}&& \Delta_{\omega_{n}}=\sum_{i=1}^{\dim M}(\nabla^{M}_{\frac{\partial}{\partial k_{i}}})^{2}\quad,
\end{eqnarray*}
where $(k_{1},\ldots,k_{\dim M})$ are coordinates on $\omega_{n}$ and $\nabla^{M}$ is the unitary connection defined by \eqref{eq:nablaM}\,. In a neighborhood of $\overline{\omega}$\,, $-\Delta_{\omega}$ is an elliptic second order operator with a scalar principal part. Moreover there exists a constant $C_{\omega}>0$ such that $C_{\omega}-\Delta_{\omega}$ is non negative on $\mathcal{C}^{\infty}_{comp}(\omega;\mathcal{L}(\mathcal{F}))$\,. The standard elliptic regularity on the smooth domain $\overline{\omega}$ (see e.g. \cite{ChPi}) says that the Friedrichs extension of $C_{\omega}-\Delta_{\omega}$\,, which is the Dirichlet realization denoted by $\mathcal{N}=C_{\omega}-\Delta_{\omega}^{D}$, is self-adjoint with domain $H^{2}(\overline{\omega};\mathcal{F})\cap H^{1}_{0}(\omega;\mathcal{F})$ and that $\mathcal{C}^{\infty}_{comp}(\omega;\mathcal{F})$ is a core for $\mathcal{N}=(C_{\omega}-\Delta_{\omega}^{D})$\,.\\
The differential operator $\tilde{A}_{I}$ acts on $\mathcal{C}^{\infty}_{comp}(\omega;\mathcal{F})$ with
\begin{eqnarray*}
   \|\tilde{A}_{I}\varphi\|\leq C\|\mathcal{N}\varphi\|\\
  \left|\langle \tilde{A}_{I}\varphi\,,\, \mathcal{N}\varphi\rangle-\langle \mathcal{N}\varphi\,,\, \tilde{A}_{I}\varphi\rangle\right|\leq C\|\mathcal{N}^{1/2}\varphi\|^{2}
\end{eqnarray*}
for all $\varphi\in \mathcal{C}^{\infty}_{comp}(\omega;\mathcal{F})$\,. The second inequality is due to the fact that $\mathcal{N}$ is a second order differential operator with a scalar principal part so that $[\mathcal{N},\tilde{A}_{I}]$ is a second order differential operator with $\mathcal{C}^{\infty}_{comp}(\omega;\mathcal{L}(\mathcal{F}))$ coefficients. Nelson's commutator theorem (see e.g. \cite{ReSi}) tells us that $\tilde{A}_{I}$ is essentially self-adjoint on $\mathcal{C}^{\infty}_{comp}(\omega;\mathcal{F})$ and therefore on $\mathcal{C}^{\infty}_{comp}(M;\mathcal{F})$\,.\\
The final point is about the local Mourre inequality \eqref{eq:localMourre}. Let $\lambda_{0}\in \overline{I}$ for any $k_{0}\in M$ such that $(k_{0},\lambda_{0})\in \Sigma$\,, Proposition~\ref{pr:AI1I} provides a neighborhood $\mathcal{V}_{0}$ of $k_{0}$ and a pair $(m_{L},n_{L})=(m_{L(k_{0})},n_{L(k_{0})})$ such that $(k_{0},\lambda_{0})\in \Omega_{m_{L},n_{L}}$ 
$$
\forall (m,n)\in L(I)\,,\quad
\pi_{m_{L},n_{L}}\circ \tilde{\nabla}^{m,n}\circ \pi_{m_{L},n_{L}}=\pi_{m_{L},n_{L}}\circ \nabla^{m,n}\pi_{m_{L},n_{L}}\,,
$$
while $\pi_{m_{L},n_{L}}\pi_{m,n}\in \{\pi_{m_{L},n_{L}},0\}$\,.\\
For $k\in \mathcal{V}_{k_{0}}$ we deduce
\begin{eqnarray*}
  \pi_{m_{L},n_{L}}[H_{0},i\tilde{A}_{I}]\pi_{m_{L},n_{L}}&=&\pi_{m_{L},n_{L}}[H_{0},-\sum_{(m,n)\in L(I)}
 g_{m}(m)\pi_{m_{L},n_{L}}\tilde{\nabla}^{m,n}_{X_{m,n}}\pi_{m_{L},n_{L}}g_{m}(k)]\pi_{m_{L},n_{L}}\\
 &=&\pi_{m_{L},n_{L}}[H_{0},-\sum_{(m,n)\in L(I)}g_{m}(k)\nabla^{m,n}g_{m}(k)]\pi_{m_{L},n_{L}}\\
                                                        &\geq &\frac{1}{2}\pi_{m_{L},n_{L}}\left(\sum_{(m,n)\in L(I)}g_{m}(k)^{2}\pi_{m,n}\right)\pi_{m_{L},n_{L}}\\
 &\geq& \frac{1}{2}\pi_{m_{L},n_{L}}=\frac{1}{2}1_{J_{m_{L},n_{L}}}(H_{0}(k))\,.
\end{eqnarray*}
Now $p_{\Sigma\to\rz}^{-1}(\left\{\lambda_{0}\right\})\cap \Sigma$ is compact and $\bigcup_{(k_{0},\lambda_{0})\in \Sigma}\mathcal{V}_{k_{0}}\times J_{m_{L(k_{0})},n_{L(k_{0})}}$ is an open covering of it. We can find $N_{\lambda_{0}}\in \nz$ such that $p_{\Sigma\to\rz}^{-1}(\left\{\lambda_{0}\right\})\cap \Sigma\subset \bigcup_{\nu=1}^{N_{\lambda_{0}}}\mathcal{V}_{k_{\nu}}\times J_{m_{L(k_{\nu})},n_{L(k_{\nu})}}$\,. Therefore we can take $\delta>0$ such that $p_{\Sigma\to\rz}^{-1}([\lambda_{0}-\delta,\lambda_{0}+\delta])\cap \Sigma\subset \bigcup_{\nu=1}^{N_{\lambda_{0}}}\mathcal{V}_{k_{\nu}}\times J_{m_{L(k_{\nu})},n_{L(k_{\nu})}}$ and we obtain
$$
1_{[\lambda_{0}-\delta,\lambda_{0}+\delta]}(H_{0})[H_{0},i\tilde{A}_{I}]1_{[\lambda_{0}-\delta,\lambda_{0}+\delta]}(H_{0})\geq \frac{1}{2}1_{[\lambda_{0}-\delta,\lambda_{0}+\delta]}(H_{0})\,,
$$
which is \eqref{eq:localMourre}\,.

\section{Comments and examples}
\label{sec:comEx}
The changes between the initial and the new versions requires several comments in particular to convince the reader that it does not change the applications which were made e.g. in \cite{GeNi2}. Some simple examples in dimension 2 are then given in order to illustrate the multistep construction of $\tilde{A}_{I}$\,.
\subsection{Comments}
\label{sec:comments}
\begin{description}
\item[a)] The explicit form of the conjugate operator $\tilde{A}_{I}$ is not important but it is important for the applications that it remains a first-order differential operator with $\mathcal{C}^{\infty}(M;\mathcal{L}(\mathcal{F}))$-coefficients. The main difference between ``Theorem''~3.1 in \cite{GeNi} and the new version Theorem~\ref{th:AI1I} is actually the local form \eqref{eq:localMourre} of Mourre's inequality around the energy $\lambda$\,. But this does not make any problem because Mourre theory  is essentially local in energy (see \cite{Mou1}\cite{Mou2}\cite{ABG}).
\item[b)] The local in energy version of Mourre inequality is actually the stable version  when perturbations $H_{0}+V$ are considered. From this point of view Theorem~3.3 of \cite{GeNi} which was a litteral application of the general Proposition~7.5.6 in \cite{ABG}, remains valid after replacing $A_{I}$ by its new version $\tilde{A}_{I}$\,. Note that a regularity $H_{0}\in C^{1+\varepsilon}(A)$ with $\varepsilon>0$\,, or $\varepsilon=0$ interpreted as a Dini-type continuity (see \cite{ABG}), is required. This was definitely incorrect in our initial version with $A=A_{I}$ and this works now with $A=\tilde{A}_{I}$\,.
\item[c)] In all the examples presented in \cite{GeNi} and in particular for the case of periodic Schr{\"o}dinger operators developed in \cite{GeNi2}, the Hilbert bundle $p_{\mathcal{F}}:\mathcal{F}\to M$ is actually a trivial one $\mathcal{F}=M\times \mathcal{H}'$\,, with $M=\tz^{d}=\rz^{d}/\zz^{d}$\,. The initial connection $\nabla^{M}$ can be taken as the trivial one and this is probably the reason why we were not very careful about the use of connections in \cite{GeNi}. However the now more accurate and correct version, shows that it is important to clarify this point especially when finite rank subbundles $p_{\pi_{m,n}\mathcal{F}}:\pi_{m,n}\mathcal{F}\to \omega_{m}$ are considered.
\item[d)] In \cite{GeNi2} with $\mathcal{F}=\tz^{d}\times \mathcal{H}'$ the perturbation $V$ that we considered were $\mathcal{L}(\mathcal{H}')$-valued pseudo-differential operator $V(D_{k})$ of order $-\mu$\,, $\mu>0$\,, with a scalar principal part. The negative order $-\mu<0$ implies that $V(D_{k})$ is a relatively compact perturbation of $H_{0}$ while the scalar principal part implies $V(D_{k})\in \mathcal{C}^{1+\mu}(\tilde{A}_{I})$ and $H_{0}+V(D_{k})\in \mathcal{C}^{1+\mu}(A_{U_{1},I2})$\,. All the arguments developed in \cite{GeNi2}, concerned with the absence of singular continuous spectrum, the virial theorem or propagation estimates, work after replacing $A_{I}$ by $\tilde{A}_{I}$\,. About the asymptotic completeness of the wave operators, which is a consequence of Proposition~7.5.6 in \cite{ABG} or Theorem~3.3 in \cite{GeNi} when $\mu>1$\,, we recall that it is still an open question in the long range case $\mu>0$\,, as it is discussed in \cite{GeNi2}.
\item[e)] About the absence of singular continuous spectrum for the free operator $H_{0}$ away from the set of thresholds $\tau$\,, a conjugate operator is not necessary and we refer the reader to \cite{Kuc} for a spectral measure argument for it.
\item[f)] Theorem~\ref{th:AI1I} and its proof were given in a global form, with $\tilde{A}_{I}$ constructed for the whole energy intervals $I\Subset \tilde{I}\Subset \rz\setminus\tau$\,. This was done in order to stick as much as possible to the initial version. This imposes the treatment of possibly several energy intervals $J_{m,n}$\,, $1\leq n\leq N_{m}$ above $\omega_{m}$\,. Actually the construction can be made simpler if one works from the beginning in $I=]\lambda-\delta,\lambda+\delta[$ and $\tilde{I}=]\lambda-2\delta,\lambda+2\delta[$ with $\delta>0$ small enough so that for every $m\in \left\{1,\ldots, M_{I}\right\}$\,, $N_{m}=1$\,. This simplifies the connections $\nabla^{\omega_{m},\pi}=\pi_{m,1}\nabla^{\omega_{m}}\pi_{m,1}$ on $p_{\pi_{m,1}\mathcal{F}}:\pi_{m,1}\mathcal{F}\to \omega_{m}$ with a single term and only one vector field $X_{m,1}$ above every $\omega_{m}$ is used in the construction of $\tilde{A}_{I}$\,. However while gluing the operators $\nabla^{\omega_{m},1}_{X_{m,1}}$ with the partition of unity $\sum_{m=1}^{M_{I}}g_{m}^{2}(k)\equiv 1$ in a neigborhood of $p_{M}(p_{\Sigma\to\rz}^{-1}([\lambda-\delta,\lambda+\delta])\cap \Sigma)$\,, the rank of the various projections $\pi_{m,1}$ changes and the  modification of the connections $\nabla^{m,1}$ into $\tilde{\nabla}^{m,1}$ is still necessary in order to ensure $H_{0}\in \mathcal{C}^{\infty}(\tilde{A}_{I})$\,. This is probably a simpler way for visualizing the whole construction, of which the generalization relies on the simple Lemma~\ref{le:Cstar}, and this is how it will be illustrated in the examples below.
\item[g)] Generically crossings of eigenvalues of a matricial self-adjoint operators occur along codimension 2 strata. When $\dim M=2$\,, singular strata with $\dim S< M$ are thus points and these singular points are removed while working in the energy intervals $I\Subset \rz\setminus \tau$\,. So when $\dim M=2$ and generically, $p_{\Sigma\to\rz}^{-1}(I)$ is made of disconnected strata of dimension $2$ locally diffeomorphic to $M$\,, and $H_{0}$ is locally (on $\Sigma$) a scalar operator. The analysis is much simpler. Such two dimensional problems were considered in \cite{FeWe} and appear in the modelling of graphene.
\item[h)] The assumption that the mapping $p_{\Sigma\to \rz}:\Sigma\to \rz$ is proper is used in a fundamental way, in order to make a stratification which is compatible with the projection $p_{\Sigma\to \rz}$\,. In some examples where a finite stratification is obviously given  it can be forgotten and the construction of a conjugate operator $\tilde{A}_{I}$  can be done without this assumption, especially when the differential of $p_{S\to\rz}$\,, the restriction of $p_{\Sigma\to \rz}$ to every stratum $S$\,, is uniformly away from $0$\,.
Then the construction relies on the local understanding of incidence of strata with the possible changes of the rank of the associated projectors. Another use  $p_{\Sigma\to\rz}:\Sigma\to \rz$ being proper, is for the construction of a self-adjoint operator for Nelson's commutator method, but this can be easy for some non compact manifold $M$ (e.g. $M=\rz^{d}$). Such a simple example is discussed below.
\item[i)] One may wonder about the mixing of the stratification of subanalytic sets, which makes sense in the analytic category with proper analytic mapping, with $\mathcal{C}^{\infty}$ partition of unity. It works for our construction of $\tilde{A}_{I}$\,. A question is whether the incidence of strata, as well as the associated spectral projectors, could be fully handled within the subanalytic category with the help of Lojasiewic inequalities. 
\end{description}

\subsection{Examples}
\label{sec:examples}

We give two  examples with $M=\rz^{2}$ and $\mathcal{H}'=\rz^{2}$ with pictures in order to visualize in simple cases our general construction and to make more explicit the problem set by the initial construction of $A_{I}$\,. Although these examples are non generic according to comment~\textbf{g)}, the two dimensional case, $M=\rz^{2}$\,, allows simple pictures.\\
{\red Other examples dimesion $3$ with  $M=\rz^{3}$ or $M=\tz^{3}$\,, motivated by the study of Maxwell equations were recently considered in \cite{Poi}. In this text a more accurately designed conjugate operator taking into account the more specific structure of the operator allows to consider the Limiti Absorption Principle at the threshold energies.}
\\

\medskip
\noindent\textbf{First example:} Consider
$$
H_{0}(k_{1},k_{2})=
\begin{pmatrix}
  k_{1}^{2}+k_{2}^{2}+k_{2}+k_{1}&k_{1}k_{2}\\
  k_{1}k_{2}& k_{1}^{2}+k_{2}^{2}+k_{2}-k_{1}
\end{pmatrix}
= k_{1}^{2}+k_{2}^{2}+k_{2}\mathrm{Id}_{\rz^{2}}+
k_{1}\begin{pmatrix}
  1&k_{2}\\
  k_{2}&-1
\end{pmatrix}\quad (k_{1},k_{2})\in \rz^{2}
$$
The characteristic variety is given by
$$
\Sigma=\left\{k_{1}^{2}+k_{2}^{2}+k_{2}\pm k_{1}\sqrt{1+k_{2}^{2}}, (k_{1},k_{2})\in \rz^{2}\right\}
$$
and is partitionned into $15$ strata given by
$$
\lambda=k_{1}^{2}+k_{2}^{2}+k_{2}\pm k_{1}\sqrt{1+k_{2}^{2}}
$$
with the following choices
\begin{itemize}
\item $\pm k_{1}>0$\,, $\pm (k_{2}+1/2)>0$\,, $\dim S=2$\,, multiplicity $1$\,, in blue or red on Figure~\ref{fig:1};
\item $k_{1}=0$\,, $\pm (k_{2}+1/2)>0$\,, $\dim S=1$\,, multiplicity 2, green on Figure~\ref{fig:1};
\item $\pm k_{1}>0$\,, $k_{2}=-1/2$\,, $\dim S=1$\,, multiplicity 1, black on Figure~\ref{fig:1};
\item $k_{1}=0$, $k_{2}=-1/2$\,, $\dim S=0$\,, multiplicity 2, black on Figure~\ref{fig:1}.
\end{itemize}
\begin{figure}[!h]
  \centering
\begin{tikzpicture}
  \pgftext{\includegraphics[scale=0.5]{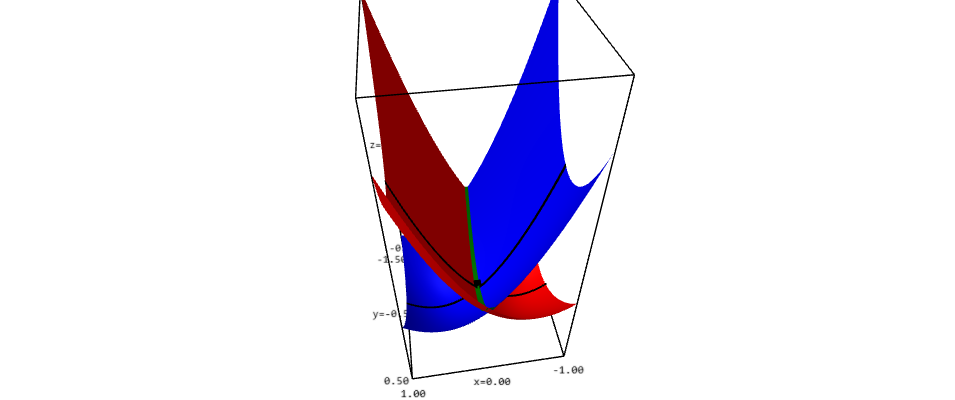}} at (0,0)
\end{tikzpicture}
\caption{Stratification of $\Sigma$}
  \label{fig:1}
\end{figure}

There is one threshold at energy $-\frac{1}{4}$ corresponding to $(k_{1}=0,k_{2}=-1/2,\lambda=-\frac{1}{4})$\,.
While working in the energy set $\tilde{I}\Subset ]-\frac{1}{4},+\infty[$ the number of strata can be reduced to $5$ the red, blue ones ($\dim S=2$, multiplicity $1$) and the green one ($\dim S=1$\,, multiplicity 2)  on Figure~\ref{fig:1}.\\
The situation is better seen by considering the level sets in $\rz^{2}$ around an energy $\lambda_{0}$ (in Figure~\ref{fig:2} $\lambda_{0}=1$)\,.
\begin{figure}[!h]
  \centering
  \begin{tikzpicture}
    \pgftext{\includegraphics[scale=0.7]{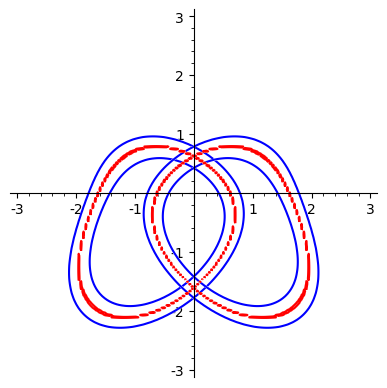}} at (0,0);
     \draw [very thick,->] (1,3)--(0.1,0.75);
     \draw [very thick,->] (1,3)--(0.1,-1.7);
     \node (P2) at (3,3.1){$\dim S=1$, multiplicity 2};
     \draw [very thick,->] (3,-1.3)--(1.5,0.5);
     \draw [very thick,->] (3,-1.3)--(0.6,-0.1);
     \draw [very thick,->] (3,-1.3)--(-0.7,-0.5);
     \draw [very thick,->] (3,-1.3)--(-2,-0.9);
     \node (P2) at (5,-1.7){$\dim S=2$, multiplicity 1};
  \end{tikzpicture}
  \caption{Level sets $\lambda\in[\lambda_{0}-\delta,\lambda_{0}+\delta]$\,, $\lambda_{0}=1$\,, $\delta=0.4$ (blue), $\delta=0.04$ (red).}
  \label{fig:2}
\end{figure}
Let us look more carefully around a point along the $k_{2}$-axis where $\lambda_{0}$ is an eigenvalue with multiplicity $2$\,. Three points $k_{m_{0}}$\,, $\dim S_{k_{m_{0}}}=1$\,, $k_{m_{1}}$ and $k_{m_{2}}$ with $\dim S_{k_{m_{1}}}=\dim S_{k_{m_{2}}}=2$\,, the boundaries of open set $\omega_{m_{j}}$ (dashed lines) and of $\supp g_{m_{j}}$ (black lines) are represented for $j=0,1,2$ in Figure~\ref{fig:3}.
\begin{figure}[!h]
  \centering
  \begin{tikzpicture}
    \pgftext{\includegraphics[scale=0.7]{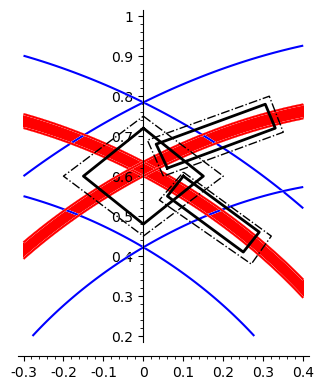}} at (0,0);
    \node (P0) at (-0.2,0.37) {$k_{m_{0}}$};
    \node (P1) at (1.2,1.2) {$k_{m_{1}}$};
    \node (P2) at (1.2,-0.4) {$k_{m_{2}}$};
  \end{tikzpicture}
  \caption{Open sets $\omega_{m}$ (dashed lines) and $\supp g_{m}$ (black lines) around the points $k_{m}$\,.\\ Level sets $\lambda\in [\lambda_{0}-\delta,\lambda_{0}+\delta]$\,, $\lambda_{0}=1$\,, $\delta=0.4$ (blue), $\delta=0.04$ (red).}
  \label{fig:3}
\end{figure}

In Figure~\ref{fig:3} the projectors $\pi_{m_{j},1}$ have the minimal rank $1$ for $j=1,2$ and the connection $\nabla^{m_{j},1}$ need not to be modified above $\omega_{m_{j}}$\,. It is not the case for $m_{0}$  where $\rank \pi_{m_{0},1}=2$ while $\supp g_{m_{0}}$ intersect $\supp g_{m_{1}}$ and $\supp g_{m_{2}}$\,. The connection $\tilde{\nabla}^{m_{0},1}$ differs from the trivial connection $\nabla^{m_{0},1}$\,, $\nabla_{X}^{m_{0},1}=X\otimes \mathrm{Id}_{\rz^{2}}$\,.  Above a neighborhood of $\supp g_{m_{j}}\cap \supp g_{m_{0}}$ for $j=1,2$ (black areas in Figure~\ref{fig:4}), it equals $\tilde{\nabla}^{m_{0},1}=\pi_{m_{j},1}\nabla^{m_{0},1}\pi_{m_{j},1}+(\mathrm{Id}_{\rz^{2}}-\pi_{m_{j},1})\nabla^{m_{0},1}(\mathrm{Id}_{\rz^{2}}-\pi_{m_{j},1})$\,.
\begin{figure}[!h]
  \centering
  \begin{tikzpicture}
    \pgftext{\includegraphics[scale=0.5]{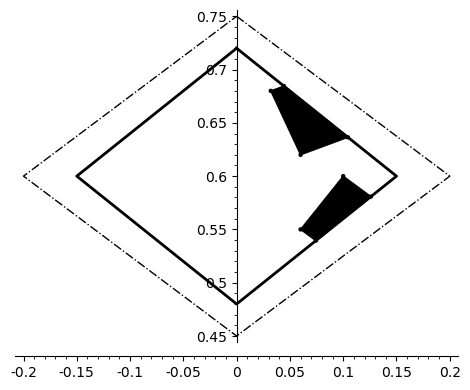}} at (0,0);
    \node (P) at (0.4,0.5) {$k_{m_{0}}$};
  \end{tikzpicture}
  \caption{Localization in $\omega_{m_{0}}$ (dashed limits) of $\supp g_{m_{0}}\cap \supp g_{m_{j}}$ (in black)\,, $j=1,2$\,, around which the connection must be modified.}
  \label{fig:4}
\end{figure}

\pagebreak[4]
\noindent\textbf{Second example:} This is a simplified version of the first one, for which we will explicitely compare the two operators $A_{I}$ and $\tilde{A}_{I}$\,, with
$$
H_{0}(k_{1},k_{2})=
\begin{pmatrix}
  k_{2}+k_{1}& k_{1}k_{2}\\
  k_{1}k_{2}&k_{2}-k_{1}
\end{pmatrix}
=k_{2}\mathrm{Id}_{\rz^{2}}+k_{1}
\begin{pmatrix}
  1&k_{2}\\
  k_{2}&-1
\end{pmatrix}\,.
$$
Here the characteristic variety is
$$
\Sigma=\left\{k_{2}\pm k_{1}\sqrt{1+k_{2}^{2}}\,, (k_{1},k_{2})\in \rz^{2}\right\}\,.
$$
It does not fullfil the condition that $p_{\Sigma\to\rz}:\Sigma\to \rz$ is proper. But as mentionned in Comment~\textbf{h)} above, it is not a problem because there is an obvious finite stratification of $\Sigma$ compatible with $p_{\Sigma\to \rz}:\Sigma\to \rz$\,.
It is given by the 5 strata given by
$$
\lambda=\lambda_{\pm}(k_{1},k_{2})=k_{2}\pm k_{1}\sqrt{1+k_{2}^{2}}
$$
with the following choices:
\begin{itemize}
\item $\pm k_{1}>0$\,, $\dim S_{\pm,\pm}=2$\,, multiplicity 1, in blue or red on Figure~\ref{fig:5};
\item $k_{1}=0$\,, $\dim S_{0}=1$\,, multiplicity 2, in green on Figure~\ref{fig:5}.
\end{itemize}
\begin{figure}[!h]
  \centering
\begin{tikzpicture}
  \pgftext{\includegraphics[scale=0.5]{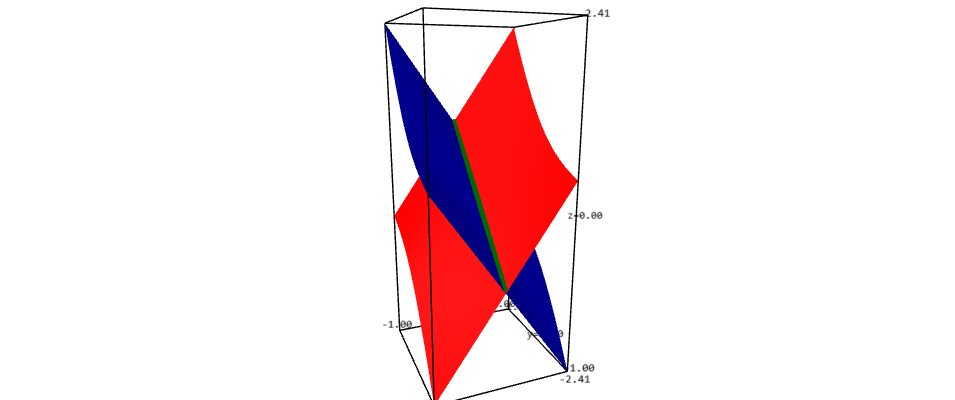}} at (0,0)
\end{tikzpicture}
\caption{Stratification of $\Sigma$ in the second example.}
  \label{fig:5}
\end{figure}
There is no threshold because $\partial_{k_{2}}k_{2}=1$ along $p_{M}S_{0}=\left\{(0,k_{2}), k_{2}\in \rz\right\}$ while $\nabla \lambda_{\pm}=
\begin{pmatrix}
  \pm\sqrt{1+k_{2}^{2}}\\
  1\pm \frac{k_{2}}{\sqrt{1+k_{2}^{2}}}k_{1}
\end{pmatrix}
$ satisfies $|\nabla\lambda_{\pm}|\geq 1$ on $p_{M}(S_{\pm,\pm})=\left\{(k_{1},k_{2})\,, \pm k_{1}>0\right\}$\,. In particular $\frac{\nabla \lambda_{\pm}}{|\nabla \lambda_{\pm}|^{2}}.\nabla$ is a smooth vector field on $p_{M}(S_{\pm,\pm})$ such that $\frac{\nabla\lambda_{\pm}}{|\nabla \lambda_{\pm}|^{2}}.\nabla \lambda_{\pm}=1$\,.
Notice also that
$$
\partial_{k_{2}}H_{0}=
\begin{pmatrix}
  1&k_{1}\\
  k_{1}&1
\end{pmatrix}
\geq \frac{1}{2}\mathrm{Id}_{\rz^{2}}\quad\text{for}~|k_{1}|\leq \frac{1}{2}\,.
$$
The spectral projectors associated with the eigenvalues $\lambda_{\pm}(k_{1},k_{2})$ for $k_{1}\neq 0$ are given by
$$
\pi_{+}=
\frac{1}{2\sqrt{1+k_{2}^{2}}}\begin{pmatrix}
 1+\sqrt{1+k_{2}^{2}}& k_{2}\\
 k_{2}&-1+\sqrt{1+k_{2}}                              
\end{pmatrix}\quad,\quad \pi_{-}=
\frac{1}{2\sqrt{1+k_{2}^{2}}}
\begin{pmatrix}
  \sqrt{1+k_{2}^{2}}-1&-k_{2}\\
  -k_{2}&\sqrt{1+k_{2}^{2}}+1
\end{pmatrix}\,.
$$
Take $g_{0}\in \mathcal{C}^{\infty}_{comp}(]-1/2,1/2[;\rz)$\,, $g_{-}\in \mathcal{C}^{\infty}(]-\infty,\frac{1}{4}[;\rz)$ and $g_{+}\in \mathcal{C}^{\infty}(]\frac{1}{4},+\infty[;\rz)$ such that $g_{0}^{2}+g_{-}^{2}+g_{+}^{2}\equiv 1$ on $\rz$\,. The operator $A_{I}$ of \cite{GeNi} is then given by
$$
-iA_{I}=g_{0}(k_{1})\frac{\partial}{\partial k_{2}}g_{0}(k_{1})
+\sum_{\varepsilon_{1},\varepsilon_{2}\in \left\{+,-\right\}}g_{\varepsilon_{1}}(k_{1})\pi_{\varepsilon_{2}}\left[\frac{\nabla \lambda_{\varepsilon_{2}}}{|\nabla \lambda_{\varepsilon_{2}}|^{2}}.\nabla+\frac{1}{2}\mathrm{div}\,\left(\frac{\nabla \lambda_{\varepsilon_{2}}}{|\nabla \lambda_{\varepsilon_{2}}|^{2}}\right)\right]\pi_{\varepsilon_{2}}g_{\varepsilon_{1}}(k_{1})\,.
$$
Then $[H_{0},iA_{I}]$ equals
$$
g_{0}^{2}(k_{1})
\begin{pmatrix}
  1&k_{1}\\
  k_{1}&1
\end{pmatrix}
+g_{+}^{2}(k_{1})+g_{-}^{2}(k_{1})=\mathrm{Id}_{\rz^{2}}+g_{0}^{2}(k_{1})k_{1}
\begin{pmatrix}
  0&1\\
  1&0
\end{pmatrix}
$$
Because $\left[
  \begin{pmatrix}
    0&1\\
    1&0
  \end{pmatrix}
  \,,\, \pi_{\pm}\right]\neq 0$\,, the double commutator $[[H_{0},iA_{I}],iA_{I}]$ is a first-order differential operator with a non vanishing principal part for $k_{1}\in \supp g_{0}\cap \supp g_{\pm}$ and it is not bounded.\\
For $\tilde{A}_{I}$ we change the trivial connection in $\left\{(k_{1}\,,\,k_{2}),\; |k_{1}|<1/2\right\}$: Take $\theta_{0}\in \mathcal{C}^{\infty}_{comp}(]-1/4,1/4[;\rz)$ and $\theta_{1}\in \mathcal{C}^{\infty}(\rz;\rz)$\,, $\supp \theta_{1}\subset \rz\setminus[-\frac{1}{8},\frac{1}{8}]$ and $\theta_{0}^{2}+\theta_{1}^{2}\equiv 1$ on $\rz$\,.
Then replace $\frac{\partial}{\partial k_{2}}$ by
$$
\tilde{\nabla}_{\frac{\partial}{\partial k_{2}}}=\theta_{0}(k_{1})\frac{\partial}{\partial k_{2}}\theta_{0}(k_{1})
+\theta_{1}(k_{1})[\pi_{+}\frac{\partial}{\partial k_{2}}\pi_{+}+\pi_{-}\frac{\partial}{\partial k_{2}}\pi_{-}]\theta_{1}(k_{1})
$$
and take
$$
-i\tilde{A}_{I}=g_{0}(k_{1})\tilde{\nabla}_{\frac{\partial}{\partial k_{2}}}g_{0}(k_{1})
+\sum_{\varepsilon_{1},\varepsilon_{2}\in \left\{+,-\right\}}g_{\varepsilon_{1}}(k_{1})\pi_{\varepsilon_{2}}\left[\frac{\nabla \lambda_{\varepsilon_{2}}}{|\nabla \lambda_{\varepsilon_{2}}|^{2}}.\nabla+\frac{1}{2}\mathrm{div}\,\left(\frac{\nabla \lambda_{\varepsilon_{2}}}{|\nabla \lambda_{\varepsilon_{2}}|^{2}}\right)\right]\pi_{\varepsilon_{2}}g_{\varepsilon_{1}}(k_{1})\,.
$$
Computing $\tilde{\nabla}_{\frac{\partial}{\partial k_{2}}}H_{0}$ gives
\begin{eqnarray*}
  \tilde{\nabla}_{\frac{\partial}{\partial k_{2}}}H_{0}
  &=&\theta_{0}^{2}(k_{1})
\begin{pmatrix}
  1&k_{1}\\
  k_{1}& 1
\end{pmatrix}
+\theta_{1}^{2}(k_{1})\left[\frac{\partial \lambda_{+}}{\partial k_{2}}\pi_{+}+\frac{\partial \lambda_{-}}{\partial k_{2}}\pi_{-}\right]
  \\
  &=&\mathrm{Id}_{\rz^{2}}+\theta_{0}^{2}(k_{1})k_{1}
\begin{pmatrix}
  0&1\\
  1&0
\end{pmatrix}
+\theta_{1}^{2}(k_{1})\frac{k_{2}k_{1}}{\sqrt{1+k_{2}^{2}}}(\pi_{+}-\pi_{-})
\end{eqnarray*}
and the commutator $[H_{0},i\tilde{A}_{I}]$ equals now
$$
\mathrm{Id}_{\rz^{2}}+k_{1}\theta_{0}^{2}(k_{1})g_{0}^{2}(k_{1})
\begin{pmatrix}
  0&1\\
  1&0
\end{pmatrix}
+\theta_{1}^{2}(k_{\red 1})g_{0}^{2}(k_{1})\frac{k_{2}k_{1}}{\sqrt{1+k_{2}^{2}}}(\pi_{+}-\pi_{-})\,.
$$
Because $\theta_{0}g_{0}\equiv 0$ on $\supp g_{\pm}$ and $\pi_{+}-\pi_{-}$ commutes with $\pi_{\pm}$\,, the double commutator $[[H_{0},i\tilde{A}_{I}],i\tilde{A}_{I}]$ is now a zeroth order operator. It is bounded and the iterated commutator can be checked to be bounded{\red: Actually all the non diagonal terms with respect to
the decomposition $\mathrm{Id}_{\rz^{2}}=\pi_{+}+\pi_{-}$\,, generated after every commutation with $i\tilde{A}_{I}$\,, are supported on $\supp \theta_{0}$ and therefore vanish on $\supp g_{\pm}$\,.}\\
Here the Mourre estimate can be written globally because
$$
\theta_{1}^{2}(k_{1})g_{0}^{2}(k_{1})\frac{k_{2}k_{1}}{\sqrt{1+k_{2}^{2}}}(\pi_{+}-\pi_{-})\geq -\frac{1}{2}\theta_{1}^{2}(k_{1})\mathrm{Id}_{\rz^{2}}
$$
and
$$
k_{1}g_{0}^{2}(k_{1})\theta_{0}^{2}(k_{1})
\begin{pmatrix}
  1&0\\
  0&1
\end{pmatrix}\geq -\frac{1}{2}\theta_{0}^{2}(k_{1})\mathrm{Id}_{\rz^{2}}\,.
$$
We obtain
$$
\left[H_{0}\,,\, i\tilde{A}_{I}\right]\geq \frac{1}{2}\mathrm{Id}_{\rz^{2}}\,.
$$
\pagebreak[4]

\noindent{\red \textbf{Acknowledgements:} The authors warmly thank Olivier Poisson for pointing out the problem which is fixed in this text.
}

\bibliographystyle{plain}

\end{document}